\theoremstyle{plain}
\newtheorem{theorem}{Theorem}
\newtheorem{claim}[theorem]{Claim}
\newtheorem{lemma}[theorem]{Lemma}
\theoremstyle{definition}
\begin{document}

\begin{frontmatter}[classification=text]

\title{Small Doubling, Atomic Structure and $\ell$-Divisible Set Families} 

\author[lg]{Lior Gishboliner}
\author[bs]{Benny Sudakov}
\author[it]{Istv\'an Tomon}

\begin{abstract}
Let $\mathcal{F}\subset 2^{[n]}$ be a  set family such that the intersection of any two members of $\mathcal{F}$ has size divisible by $\ell$. 
		The famous Eventown theorem states that if $\ell=2$ then $|\mathcal{F}|\leq 2^{\lfloor n/2\rfloor}$, and this bound can be achieved by, e.g., an `atomic' construction, i.e. splitting the ground set into disjoint pairs and taking their arbitrary unions. Similarly, splitting the ground set into disjoint sets of size $\ell$ gives a family with pairwise intersections divisible by $\ell$ and size $2^{\lfloor n/\ell\rfloor}$. 
		Yet, as was shown by Frankl and Odlyzko, these families are far from maximal. For infinitely many $\ell$, they constructed families $\mathcal{F}$ as above of size $2^{\Omega(n\log \ell/\ell)}$. On the other hand, if the intersection of \emph{any number} of sets in $\mathcal{F}\subset 2^{[n]}$ has size divisible by $\ell$, then it is easy to show that $|\mathcal{F}|\leq 2^{\lfloor n/\ell\rfloor}$. In 1983 Frankl and Odlyzko conjectured that $|\mathcal{F}|\leq 2^{(1+o(1)) n/\ell}$ holds already if one only requires that for some $k=k(\ell)$ any $k$ distinct members of $\mathcal{F}$ have an intersection of size divisible by $\ell$.
		We completely resolve this old conjecture in a strong form, showing that $|\mathcal{F}|\leq 2^{\lfloor n/\ell\rfloor}+O(1)$ if $k$ is chosen appropriately, and the $O(1)$ 
		error term is not needed if (and only if) $\ell \, | \, n$, and $n$ is sufficiently large. Moreover the only extremal configurations have `atomic' structure as above.
		Our main tool, which might be of independent interest, is a structure theorem for set systems with small `doubling'.
\end{abstract}
\end{frontmatter}


\section{Introduction}
	
	An \emph{eventown} is a family $\mathcal{F}\subset 2^{[n]}$ such that $|A\cap B|$ is even for any $A,B\in\mathcal{F}$. The famous Eventown theorem of Berlekamp \cite{B69},  also proved independently by Graver \cite{G75}, states that if $\mathcal{F}\subset 2^{[n]}$ is an eventown, then $|\mathcal{F}|\leq 2^{\lfloor n/2\rfloor}$. This bound is also the best possible, and a simple construction showing this can be obtained as follows. Say that a family $\mathcal{F}\subset 2^{[n]}$ is \emph{atomic}, if there exist disjoint sets $A_1,\dots,A_d\subset [n]$ such that $\mathcal{F}$ is the family of all sets $F$ satisfying that either $A_i\subset F$ or $A_i\cap F=\emptyset$ for every $i\in [d]$, and $F$ contains no element not covered by the sets $A_i$. 
	The sets $A_1,\dots,A_d$ are called the {\em atoms} of $\mathcal{F}$. 
	Also, let $S(n,\ell)$ be the atomic family for which $d=\lfloor n/\ell\rfloor$ and all $A_i, i\in [d]$ have size exactly $\ell$. Note that $|S(n,\ell)|=2^{\lfloor n/\ell\rfloor}$, and the size of the intersection of any number of sets in $S(n,\ell)$ is divisible by $\ell$. Therefore, the family $S(n,2)$ is an eventown of size $2^{\lfloor n/2\rfloor}$. 
	Moreover, any eventown family can be completed to a maximal one of size $2^{\lfloor n/2\rfloor}$, see e.g. the book of Babai and Frankl \cite{BF92}, which is also a general reference on intersection problems.
	
	In general, one might be tempted to conjecture that the maximal families $\mathcal{F}\subset 2^{[n]}$, whose all pairwise intersections are divisible by $\ell$, have size close to $2^{(1+o(1))n/\ell}$. However, this turns out to be far from the truth. Frankl and Odlyzko \cite{FO83} proved that if there exists a Hadamard matrix of order $4\ell$, then there exists such a family of size $2^{\Omega(n\log \ell/\ell)}$, and this bound is also the best possible up to the constant factor. On the other hand, it follows from a result of Deza, Erd\H{o}s and Frankl \cite{DEF78}, proved also by Frankl and Tokushige \cite{FT16},  that if we consider uniform families, that is, $\mathcal{F}\subset [n]^{(r)}$, then $|\mathcal{F}|\leq \binom{\lfloor n/\ell\rfloor}{ r/\ell}$ if $n$ is sufficiently large given $r$ and $\ell\mid r$. This bound is also the best possible as witnessed by the family $\mathcal{F}=[n]^{(r)}\cap S(n,\ell)$. Let us emphasize that the condition that $n$ must be large compared to $r$ is necessary, otherwise this would contradict the aforementioned construction of Frankl and Odlyzko.
	
	Despite all the above, if we require that the intersection of \emph{any number} of sets in $\mathcal{F}\subset 2^{[n]}$ must have size divisible by $\ell$, then it is not difficult to show that $|\mathcal{F}|\leq 2^{\lfloor n/\ell\rfloor}$ for any $n$ and $\ell$. Moreover, in this case, $\mathcal{F}$ is contained in some isomorphic copy of $S(n,\ell)$ (we say that two families in $2^{[n]}$ are isomorphic if they are equal up to a permutation of $[n]$). In 1983, Frankl and Odlyzko \cite{FO83} asked whether a similar conclusion holds if we only require that the intersection of any $k$ distinct sets in $\mathcal{F}$ has size divisible by $\ell$, where $k$ is some constant only depending on $\ell$. More precisely, they conjectured that for some $k$, we must have $|\mathcal{F}|\leq 2^{(1+o(1))n/\ell}$ for such a family $\mathcal{F}$. Until recently, it was not even known if the bound $2^{O(n\log \ell/\ell)}$ can be improved for any constant $k$. Indeed, while there are many tools to handle pairwise intersections as they correspond to the scalar product of characteristic vectors, $k$-wise intersections are usually harder to analyse, see, e.g.,  \cite{FS,GS02,MST,SV18,SV,V99} for related results. Also, it was shown in \cite{SV18} that if the conjecture is true, $k$ must depend on $\ell$. In particular, if $\ell$ is a power of $2$, there exist families $\mathcal{F}\subset 2^{[n]}$ such that the intersection of any $k$ sets in $\mathcal{F}$ has size divisible by $\ell$, and $|\mathcal{F}|\geq 2^{c_{k}n\log \ell/\ell}$, where $c_{k}>0$ is a constant only depending on $k$. In this paper we resolve the conjecture of Frankl and Odlyzko in the following strong form.
	
	\begin{theorem}\label{thm:main0}
		Let $\ell$ be a positive integer, then there exists $k=k(\ell)$ such that for every positive integer $n$ the following holds. Let $\mathcal{F}\subset 2^{[n]}$ such that the intersection of any $k$ distinct elements of $\mathcal{F}$ is divisible by $\ell$. Then $|\mathcal{F}|\leq 2^{\lfloor n/\ell\rfloor}+c$, where $c=c(\ell,k)$ is a constant, and $c=0$ if $\ell\mid n$ and $n$ is sufficiently large.
	\end{theorem}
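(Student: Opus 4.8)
The plan is to show that a family $\mathcal F$ satisfying the hypothesis, if large, is --- up to a bounded number of exceptional members --- contained in an \emph{atomic} family whose atoms are forced to have size at least $\ell$; this immediately yields $|\mathcal F|\le 2^{\lfloor n/\ell\rfloor}+O(1)$, and a refinement gives the exact bound and the extremal structure when $\ell\mid n$. The basic observation is a M\"obius-inversion consequence of the hypothesis. For any $F_1,\dots,F_m\in\mathcal F$, partition $[n]$ into the atoms $P_I=\bigl(\bigcap_{i\in I}F_i\bigr)\setminus\bigl(\bigcup_{j\notin I}F_j\bigr)$, $I\subseteq[m]$; from $\bigl|\bigcap_{i\in I}F_i\bigr|=\sum_{J\supseteq I}|P_J|$, inversion on the subset lattice gives $|P_I|=\sum_{J\supseteq I}(-1)^{|J\setminus I|}\bigl|\bigcap_{i\in J}F_i\bigr|$, and when $|I|\ge k$ every summand on the right is the size of an intersection of at least $k$ distinct members of $\mathcal F$, hence divisible by $\ell$; so $\ell\mid|P_I|$. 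Thus every ``deep'' atom --- one contained in at least $k$ of the $F_i$ --- has size a multiple of $\ell$, while the ``shallow'' atoms number only $O_k(m^{k-1})$. Taking $\{F_1,\dots,F_m\}=\mathcal F$, the deep atoms of $\mathcal F$ itself have size divisible by $\ell$, hence at least $\ell$.

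The crux is to show that $\mathcal F$ has \emph{small doubling}, the hypothesis of the structure theorem for set systems of small doubling mentioned above, and then to apply that theorem: it produces an atomic family $\mathcal G$ with atoms $A_1,\dots,A_d$ such that all but $O_{\ell,k}(1)$ members of $\mathcal F$ lie in $\mathcal G$ and, moreover, each $A_i$ can be written as the intersection of at least $k$ members of $\mathcal F$. By the previous paragraph the latter forces $\ell\mid|A_i|$, hence $|A_i|\ge\ell$, so $d\ell\le n$ and $d\le\lfloor n/\ell\rfloor$; therefore $|\mathcal F|\le|\mathcal G|+O_{\ell,k}(1)=2^d+O_{\ell,k}(1)\le 2^{\lfloor n/\ell\rfloor}+c$, which already settles the Frankl--Odlyzko conjecture in the strong additive form. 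Morally, the structure theorem should hold because once almost every member of $\mathcal F$ is a union of $\ell$-divisible deep atoms plus a tiny shallow remainder, the members cannot be spread out in $\mathbb F_2^n$, so $\mathcal F$ is governed by an approximate group/lattice, which a Freiman--Ruzsa-type argument tailored to set families turns into a genuine atomic skeleton.

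To obtain $c=0$ when $\ell\mid n$ and $n$ is large, one argues more carefully. If $d<\lfloor n/\ell\rfloor$ then $|\mathcal F|\le 2^{\lfloor n/\ell\rfloor-1}+O_{\ell,k}(1)<2^{\lfloor n/\ell\rfloor}$ for $n$ large, so we may assume $d=\lfloor n/\ell\rfloor$; since $\ell\mid n$, this forces every atom of $\mathcal G$ to have size exactly $\ell$ and $\bigcup_iA_i=[n]$, i.e.\ $\mathcal G$ is a copy of $S(n,\ell)$. If some $E\in\mathcal F\setminus\mathcal G$ existed, then, as $\mathcal F$ omits only $O_{\ell,k}(1)$ members of $\mathcal G$, at least $d-1-O_{\ell,k}(1)\ge k$ of the sets $A_i\cup A_j$ with $j\ne i$ would lie in $\mathcal F$, and their intersection is exactly $A_i$; if $E$ failed to respect the atom partition, say $\emptyset\subsetneq E\cap A_i\subsetneq A_i$, then $E$ together with $k$ of these sets would be $k+1$ distinct members of $\mathcal F$ whose intersection $E\cap A_i$ has size strictly between $0$ and $\ell$, a contradiction; and if $E$ respected the partition it would be a union of atoms (there are no uncovered points, as $\ell\mid n$), hence lie in $\mathcal G$, again a contradiction. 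Therefore $\mathcal F=\mathcal G$, a copy of $S(n,\ell)$, and $|\mathcal F|=2^{\lfloor n/\ell\rfloor}$. When $\ell\nmid n$ this breaks down: the $n\bmod\ell$ points outside $\bigcup_iA_i$ may be used by a bounded number of extra members (as in $S(n,\ell)\cup\{[n]\}$), so the $O(1)$ term is genuinely needed there --- this is the ``if and only if''.

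I expect the main obstacle to be the middle step --- deriving small doubling from $k$-wise $\ell$-divisibility and proving the structure theorem itself. The difficulty is quantitative: $|\mathcal F|$ can be as large as $2^{\Omega(n\log\ell/\ell)}$, far above $2^{n/\ell}$, so one cannot afford any loss polynomial in $|\mathcal F|$, and in particular the trivial bound $O_k(|\mathcal F|^{k-1})$ on the number of shallow atoms is useless. Instead one must select a subcollection $F_1,\dots,F_m\subseteq\mathcal F$ that is not too large yet whose deep atoms already capture almost all of $[n]$ and almost all of the combinatorial complexity of $\mathcal F$, presumably via an iterative/bootstrapping scheme. A secondary obstacle is making the passage from ``approximately atomic'' to the exact bound and the uniqueness of $S(n,\ell)$ fully rigorous --- controlling the interaction of the $O(1)$ defect members with the size-$\ell$ atoms --- which is also where $k$ must be chosen large enough and where the $\ell\mid n$ versus $\ell\nmid n$ dichotomy enters.
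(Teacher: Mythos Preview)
Your outline has the right architecture --- pass to an almost-atomic structure, force the atoms to have size divisible by $\ell$, conclude $d\le\lfloor n/\ell\rfloor$ --- but the two steps you flag as ``the crux'' are precisely where the real work lies, and your sketch does not supply them.

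\textbf{Small doubling is not automatic.} Nothing in the $k$-wise divisibility hypothesis directly bounds $\dim\langle\mathcal F\cdot\mathcal F\rangle-\dim\langle\mathcal F\rangle$; in fact for small $k$ it can be as large as $\Theta(n)$. The paper manufactures small doubling by iterating $\mathcal F_0=\mathcal F$, $\mathcal F_{i+1}=\mathcal F_i\cdot\mathcal F_i$: each step halves the closedness parameter (if $\mathcal F$ is $k$-closed then $\mathcal F\cdot\mathcal F$ is $\lfloor k/2\rfloor$-closed), and after $t$ steps pigeonhole on the monotone sequence $\dim\langle\mathcal F_i\rangle_p\in[n]$ gives some level with doubling at most $n/t$. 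This is why $k$ is taken of order $2^{\Theta(\ell)}$, not polynomial in $\ell$; your ``iterative/bootstrapping scheme'' remark points in the right direction but this concrete mechanism is the missing idea.

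\textbf{The structure theorem does not say what you invoke.} Its conclusion is a partition of the \emph{ground set} $[n]$ into $d=\dim\langle\mathcal F\rangle$ maximal sets of twins $A_1,\dots,A_d$ plus a low-dimensional remainder $B$; it says nothing like ``all but $O_{\ell,k}(1)$ members of $\mathcal F$ lie in an atomic $\mathcal G$,'' nor that each $A_i$ is an intersection of $\ge k$ members of $\mathcal F$. Hence your M\"obius-inversion observation (correct as stated) does not apply to the $A_i$'s. The paper shows $\ell\mid|A_i|$ by an entirely different, linear-algebraic route: the $2$-closed property makes a certain bilinear form $b(v,w)=\sum_i|A_i|\,v_iw_i$ vanish on a large subspace, forcing most $|A_i|$ to be $\equiv 0\pmod p$; a bootstrap via raising to suitable powers lifts this to $p^\alpha$, and the Chinese remainder theorem assembles the prime-power pieces. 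Your M\"obius route could in principle replace this once you know $\mathcal F$ is nearly all of the atomic family (so that each $A_i$ really is a $k$-fold intersection), but that is only available \emph{after} the bound is proved, which is circular.

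Your endgame for $\ell\mid n$ is essentially correct and matches the paper: once $d=\lfloor n/\ell\rfloor$ and all $|A_i|=\ell$, one writes each $A_i$ as a $(k-1)$-fold intersection of members of $\mathcal F'$ (using that $\mathcal F'$ misses only $O(n)$ elements of the atomic family) and tests any putative $E\in\mathcal F\setminus\mathcal G$ against it. The reduction from weakly $k$-closed to $k$-closed by removing $O(n)$ sets (an Oddtown-type argument) is also routine and handled as you expect.
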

	
	Note that the error term $c$ is needed if $\ell\nmid n$. Indeed, in this case $S(n,\ell)$ is not extremal, one can add a constant number of sets contained in the nonempty set not covered by members of $S(n,\ell)$ while retaining the property that the intersection of every $k$ distinct sets has size divisible by $\ell$.
	
	\subsection{Stability}
	
	As we mentioned above, maximum size eventowns are not unique. In particular, any eventown can be completed to an eventown of size $2^{\lfloor n/2\rfloor}$. However, as it was proved in \cite{SV18}, if we require that the intersection of any three sets is even sized, then $S(n,2)$ is the unique family achieving the maximum. Moreover, in this case we have stability, that is, if $\mathcal{F}\subset 2^{[n]}$ has this property and $|\mathcal{F}|\geq (1-\epsilon) 2^{\lfloor n/2\rfloor}$ for some small $\epsilon$, then $\mathcal{F}$ is a subfamily of some isomorphic copy of $S(n,2)$. 
	
	In Theorem \ref{thm:main0}, we also have stability. More precisely, if $\mathcal{F}\subset 2^{[n]}$ such that the intersection of  any $k$ distinct elements of $\mathcal{F}$ is divisible by $\ell$, and $|\mathcal{F}|> \frac{1}{2}\cdot 2^{\lfloor n/\ell\rfloor}$, then one can remove a constant number of sets  from $\mathcal{F}$ to make it a subfamily of some isomorphic copy of $S(n,\ell)$. However, somewhat surprisingly, a much more robust form of stability also holds. We show that under the substantially weaker condition $|\mathcal{F}|>2^{\alpha n}$, the family $\mathcal{F}$ already highly resembles a subfamily of $S(n,\ell)$ if $k$ is chosen appropriately with respect to $\alpha$ and $\ell$. The following result showing this seems to be new even for $\ell=2$.
	
	Say that a family $\mathcal{F}\subset 2^{[n]}$ is \emph{$k$-closed  $\mathrm{(mod }\ \ell)$} if the intersection of any $k$ (not necessarily distinct) sets in $\mathcal{F}$ has size divisible by $\ell$. Later, we show that if $\mathcal{F}$ has the property that any $k$ {\em distinct} sets in $\mathcal{F}$ have an intersection of size divisible by $\ell$, then $\mathcal{F}$ can be made $k$-closed by removing $O(n)$ elements. The next statement will be more convenient to state for $k$-closed families, however, this only makes a small difference by the previous claim.
	If $X\subset [n]$ then $\mathcal{F}|_{X}=\{F\cap X: F \in \mathcal{F}\}$ denotes the \emph{projection} of $\mathcal{F}$ onto $X$.
	
	\begin{theorem}\label{thm:stab}
		Let $\epsilon>0$, and let $\ell$ be a positive integer, then there exists $k=k(\ell,\epsilon)$ such that the following holds. Let $\mathcal{F}\subset 2^{[n]}$ be $k$-closed $\mathrm{(mod }\ \ell)$. Then there exist $X\subset [n]$ such that $|\mathcal{F}|_{X}|\geq2^{-\epsilon n}|\mathcal{F}|$, and  $\mathcal{F}|_{X}$ is a subfamily of an isomorphic copy of $S(|X|,\ell)$.
	\end{theorem}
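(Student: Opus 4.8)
The plan is to reduce the theorem to a local statement about the coordinates of $\mathcal F$, and then to prove that statement by combining the structure theorem for set systems of small doubling (a Freiman-type result) with the $\ell$-divisibility of small intersections.

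If $|\mathcal F|\le 2^{\epsilon n}$ we are done by taking $X=\emptyset$, since then $\mathcal F|_X\subseteq\{\emptyset\}=S(0,\ell)$ and $|\mathcal F|_X|=1\ge 2^{-\epsilon n}|\mathcal F|$; so assume $|\mathcal F|>2^{\epsilon n}$. Call $j,j'\in[n]$ \emph{equivalent} if $j\in F\iff j'\in F$ for every $F\in\mathcal F$, and let $P_1,\dots,P_m$ be the equivalence classes; each $F\in\mathcal F$ is a union of classes. Form $X$ by choosing inside every class $P_i$ with $|P_i|\ge\ell$ an arbitrary subset of size $\ell\lfloor|P_i|/\ell\rfloor$, and taking nothing from the smaller classes. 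Slicing these chosen pieces into blocks of size exactly $\ell$ gives a partition of $X$ with respect to which every $F\cap X$ is a union of blocks, so $\mathcal F|_X$ is a subfamily of a copy of $S(|X|,\ell)$. Since $\mathcal F|_X$ is the projection of $\mathcal F$ onto the coordinates of the classes of size $\ge\ell$, we get $|\mathcal F|_X|\ge 2^{-t}|\mathcal F|$, where $t$ is the number of classes of size $<\ell$. Hence it suffices to show: \emph{if $\mathcal F$ is $k$-closed $\mathrm{(mod }\ \ell)$ and $k=k(\ell,\epsilon)$ is large enough, then at most $\epsilon n$ equivalence classes have size $<\ell$} (ideally: all but $\le\epsilon n$ classes have size divisible by $\ell$).

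To prove this I would first use the $k$-closedness (with $k$ large) to place $\mathcal F$, after discarding a small fraction of its members and projecting onto a well-chosen set of coordinates, into the small-doubling regime, so that the structure theorem exhibits it as a subfamily of positive density (bounded below in terms of $\ell$ and $\epsilon$) of a coset $g+W$ of a subspace $W\le\mathbb F_2^n$. In this model the equivalence classes are exactly the classes of coordinates on which the coordinate functionals $v\mapsto v_j$ of $W$ agree, refined by the fixed pattern $g$. The divisibility enters as follows: for members $g+v_1,\dots,g+v_k$ of $\mathcal F$, the size of $\bigcap_i(g+v_i)$ is, modulo $\ell$, a weighted count $\sum_\psi |C_\psi|\cdot[\text{all }v_i\text{ lie on the same side of }\psi]$ over the functional-classes; since $\mathcal F$ is dense in $g+W$ and $k$ is large, one can choose the $k$-tuples so as to isolate one functional-class at a time up to controllable error, forcing $|C_\psi|\equiv 0\pmod{\ell}$ for all but $O(\epsilon n)$ of the classes. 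Combining this with the reduction above, and taking $k$ to be the larger of the two thresholds that arise, completes the argument.

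I expect the main obstacle to be exactly this marriage of the two ingredients: the structure theorem only supplies additive (subspace) structure, whereas the hypothesis constrains $k$-wise intersections, which — unlike pairwise ones — are not governed by any bilinear form. So the delicate point is to squeeze, out of the divisibility of $k$-fold intersections of a dense subset of a subspace, enough information to determine the class sizes modulo $\ell$, all while keeping $k$ a function of $\ell$ and $\epsilon$ alone and not of $n$. A secondary difficulty is verifying, in the first step, that a $k$-closed family with many small classes genuinely lands in the small-doubling regime after an appropriate restriction, which I anticipate will require iterating a density-increment / projection argument.
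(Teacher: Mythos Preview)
Your reduction is where the argument breaks. You claim it suffices to show that a $k$-closed family has at most $\epsilon n$ twin-classes of size $<\ell$, but this is false for every $k$. Take $\ell=p$ prime and the construction the paper gives right after the statement: on half of $[n]$ one partitions into blocks $B_j$ of size $p^{k+1}$, identifies each $B_j$ with $\mathbb F_p^{k+1}$, and requires $F\cap B_j$ to be a hyperplane. This family is $k$-closed, yet inside each $B_j$ the maximal twin-classes are the punctured lines through the origin, each of size $p-1<\ell$; summing over all $B_j$ yields $\Theta(n)$ small classes, independently of $k$. So the number $t$ of small classes simply cannot be bounded by $\epsilon n$. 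Your inequality $|\mathcal F|_X|\ge 2^{-t}|\mathcal F|$ is of course valid, but it throws away exactly the information that matters: what one must (and can) bound is $|\mathcal F|_{[n]\setminus X}|$ itself, not $2^t$. The paper does this by showing that $\dim\langle\mathcal F|_B\rangle_p$ is small for each prime $p\mid\ell$ (where $B$ collects the classes whose size is not divisible by the relevant prime power) and then invoking $|\mathcal F|_B\cap\{0,1\}^B|\le 2^{\dim\langle\mathcal F|_B\rangle_p}$.

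The second half of your plan is also aimed at the wrong target. The ``small doubling'' that is relevant here is not additive structure in $\mathbb F_2^n$---there is no reason a $k$-closed family should sit with positive density in a coset $g+W$---but rather growth of $\dim\langle\,\cdot\,\rangle_p$ under the \emph{coordinate-wise product} $\mathcal F\mapsto\mathcal F\cdot\mathcal F$ (i.e.\ intersection). The mechanism that produces small doubling, which your sketch does not supply, is to iterate $\mathcal F_{i}=\mathcal F_{i-1}\cdot\mathcal F_{i-1}$ for $t\approx 1/\epsilon$ steps; since all the dimensions lie in $[n]$, pigeonhole yields an $r$ with $\dim\langle\mathcal F_{r}\cdot\mathcal F_r\rangle_p\le\dim\langle\mathcal F_r\rangle_p+n/t$, and only then does the structure theorem (for $\cdot$, not $+$) apply. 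The subsequent step---forcing almost all twin-class sizes to vanish modulo $p^\alpha$---is not an ``isolate one class with a well-chosen $k$-tuple'' argument but an induction on $\alpha$ using a bilinear form over $\mathbb F_p$; your density-in-a-coset heuristic does not produce this.
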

	
	The following construction shows that this theorem is also optimal in a certain sense. Suppose that $\ell=p$ is a prime and $2p^{k+1}$ divides $n$. Partition $\{1,\dots,n/2\}$ into sets $A_1,\dots,A_q$ of size $p$, and partition $\{n/2+1,\dots,n\}$ into sets  $B_1,\dots,B_r$ of size $p^{k+1}$. Let $\mathcal{F}$ be the family of sets $F$ of the following form.
	\begin{itemize}
		\item For $i\in [q]$, either $A_i\subset F$ or $A_i\cap F=\emptyset$.
		\item For $j\in[r]$, identify $B_j$ with the vector space $\mathbb{F}_p^{k+1}$. Then $F\cap B_j$ is a $k$-dimensional subspace of $\mathbb{F}_p^{k+1}$.
	\end{itemize}
	Clearly, we have $|\mathcal{F}|=2^{q}(\frac{p^{k+1}-1}{p-1})^{r}>2^{n/2p}p^{nk/2p^{k+1}}$. Also, if $F_1,\dots,F_k\in\mathcal{F}$, then $|F_1\cap\dots\cap F_k|$ is divisible by $p$. Indeed, $|F_1\cap\dots\cap F_k\cap A_i|\in \{0,p\}$, and $F_1\cap\dots\cap F_k\cap B_j$ is a subspace of $\mathbb{F}_p^{k+1}$ of dimension at least $1$, so its size is also divisible by $p$. Finally, if $X\subset [n]$ is such that $\mathcal{F}|_{X}$ is a subfamily of some isomorphic copy of $S(|X|,p)$, then $X\subset \{1,\dots,n/2\}$ and $|\mathcal{F}|_{X}|\leq 2^{n/2p}<2^{-n(\log_2 p)k/2p^{k+1}} |\mathcal{F}|.$
	
	\subsection{Tools}
	
	The main technical tool we develop to prove Theorem \ref{thm:main0} is a structure theorem for set systems with small `doubling'. This result is similar in spirit to the famous Freiman-Ruzsa type theorems in additive combinatorics. These theorems describe the approximate structure of the subsets $A$ of a group with the property that the sum-set $A + A$ has size not much larger than $A$ (i.e. $A$ has small doubling). The theorem of Freiman and Ruzsa \cite{Freiman,Ruzsa} states that a subset of the integers with small doubling must be contained in a so-called generalized arithmetic progression of bounded rank (see also \cite{TV}). This classical result was subsequently generalized to abelian groups by Green and Ruzsa \cite{GR07} and to all groups by Breuillard, Green and Tao \cite{BGT12} (see also the survey \cite{BGT_Survey}). These results show that sets with small doubling must be close in structure to one of few natural examples. 
	
	Given a set-family $\mathcal{F}$, our measure for the size of $\mathcal{F}$ will be the dimension of the subspace $\langle \mathcal{F} \rangle$ spanned by the characteristic vectors of the sets in $\mathcal{F}$ over some field $\mathbb{F}$. 
	Let $\mathcal{F}\cdot\mathcal{F}=\{A\cap B:A,B\in\mathcal{F}\}$. Note that, by definition, $\mathcal{F} \subset \mathcal{F} \cdot \mathcal{F}$. 
	What can we say about $\mathcal{F}$ if the dimension of $\langle \mathcal{F} \cdot \mathcal{F} \rangle$ is not much larger than that of $\langle \mathcal{F} \rangle$? Observe that if $S$ is an atomic set-family, then $S \cdot S = S$.
	Our structure theorem shows that this is essentially the only possible example: any set-family $\mathcal{F}$ with small `doubling' must be close to being atomic. To make this more precise, we need the following definition. Given $i,j\in [n]$, say that $i$ and $j$ are {\em twins} for $\mathcal{F}$ if every $F \in \mathcal{F}$ either contains both $i,j$ or none of them, and there is at least one $F \in \mathcal{F}$ such that $i,j \in F$. Note that being twins is an equivalence relation (on the set of $i \in [n]$ which are contained in at least one $F \in \mathcal{F}$). A set of coordinates $T\subset [n]$ is called a \emph{set of twins} if any pair of elements in $T$ are twins, or $|T|=1$. Also, say that $T$ is a maximal set of twins if it is a complete equivalence class of the twins relation. We can now state our structure theorem.

	\begin{theorem}\label{cor:stability}
		Let $\mathcal{F}\subset\{0,1\}^{n}$, let $\mathbb{F}$ be a field, and suppose that $\dim\langle \mathcal{F} \rangle = d$ and $\dim \langle \mathcal{F}\cdot \mathcal{F} \rangle=d+h$. Then $[n]$ can be partitioned into $d+1$ sets $A_1,\dots,A_d,B$ such that $A_{i}$ is a maximal set of twins for $\mathcal{F}$ for $i\in [d]$, and $\dim \langle \mathcal{F}|_{B} \rangle \leq 2h$.
	\end{theorem}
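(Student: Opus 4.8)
I would first translate the statement into a question about the columns of the incidence matrix. Write $M$ for the $0/1$ matrix with rows indexed by $\mathcal{F}$ and columns by $[n]$, and let $c_i\in\{0,1\}^{\mathcal{F}}$ be its $i$-th column. Then $i,j$ are twins for $\mathcal{F}$ exactly when $c_i=c_j\neq 0$, so the maximal sets of twins are the coordinate classes with equal nonzero column, while the remaining coordinates (zero columns) are those lying in no member of $\mathcal{F}$. Since row rank equals column rank, $\dim\langle\mathcal{F}|_X\rangle=\dim\operatorname{span}\{c_i:i\in X\}$ for every $X\subseteq[n]$; in particular $\dim\operatorname{span}\{c_i\}=d$, and $\dim\langle\mathcal{F}\cdot\mathcal{F}\rangle=\dim\operatorname{span}\{c_i\otimes c_i:i\in[n]\}$, because the $i$-th column of the incidence matrix of $\mathcal{F}\cdot\mathcal{F}$ is precisely (the flattening of) $c_i\otimes c_i$. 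Let $c^{(1)},\dots,c^{(m)}$ be the distinct nonzero columns; one always has $m\geq d$, and the theorem amounts to choosing $d$ of the $m$ maximal twin classes to be $A_1,\dots,A_d$ so that the span of the columns of the remaining twin classes has dimension at most $2h$ (the leftover zero coordinates are absorbed into $B$).

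The one extra piece of structure I would exploit is that each $c^{(v)}$ is a $0/1$ vector, so $(c^{(v)}_F)^2=c^{(v)}_F$. Hence the linear ``restrict to the diagonal'' map $\psi\colon\mathbb{F}^{\mathcal{F}\times\mathcal{F}}\to\mathbb{F}^{\mathcal{F}}$ sends $c^{(v)}\otimes c^{(v)}\mapsto c^{(v)}$, and therefore restricts to a surjection $\langle\mathcal{F}\cdot\mathcal{F}\rangle\twoheadrightarrow\langle\mathcal{F}\rangle$ whose kernel $K$ has dimension exactly $h$. Two consequences drive the argument: (i) if a column $c^{(v)}$ is a coloop (not in the span of the other columns) then $c^{(v)}\otimes c^{(v)}$ is a coloop among the squares, since a dependence among squares would map under $\psi$ to a dependence among columns; and (ii) fixing a basis $c^{(1)},\dots,c^{(d)}$ among the columns and writing $c^{(v)}=\sum_k\lambda^{(v)}_k c^{(k)}$, the identity $c^{(v)}\otimes c^{(v)}=\sum_{k,l}\lambda^{(v)}_k\lambda^{(v)}_l\,c^{(k)}\otimes c^{(l)}$ together with $\dim\langle\mathcal{F}\cdot\mathcal{F}\rangle=d+h$ confines the ``Veronese vectors'' $\bigl(\lambda^{(v)}_k\lambda^{(v)}_l\bigr)_{k<l}$ to a subspace of dimension at most $h$. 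Note that when $h=0$ this already forces every column to equal one of the basis columns, so $m=d$ and the theorem holds with $B$ consisting only of the zero coordinates — this is the base case.

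The general case I would handle by a peeling procedure on the reduced family (one coordinate per maximal twin class). Repeatedly: \emph{(a)} if some column $c^{(v)}$ is a coloop, make its class the next atom $A_i$ and pass to $\mathcal{F}$ restricted to the complementary coordinates — by (i) this drops $d$ by one and keeps $h$ fixed; \emph{(b)} otherwise, if some square $c^{(v)}\otimes c^{(v)}$ is a coloop among the squares, put its class into $B$ and delete that coordinate — this keeps $d$ and drops $h$ by one; \emph{(c)} if neither move is available while the rank is still positive, stop. Projecting out coordinates never merges twin classes of distinct columns, so each class designated along the way is a maximal set of twins for the original $\mathcal{F}$. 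If the process reaches rank $0$, then exactly $d$ atoms and $h$ moves of type (b) were performed, the leftover coordinates are zero columns and join $B$, and $\dim\langle\mathcal{F}|_B\rangle\leq h\leq 2h$. If instead it stalls at a configuration of rank $d_0\geq 1$ with no coloop column and no coloop square, I would take any $d_0$ of the remaining classes as the last atoms and the rest into $B$; then $\dim\langle\mathcal{F}|_B\rangle\leq (h-h_0)+\min(m_0-d_0,d_0)\leq (h-h_0)+d_0$, where $h_0$ is the residual doubling, so the whole proof reduces to the key claim
\[
\text{every configuration with no coloop column and no coloop square satisfies } d_0\leq 2h_0 .
\]

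Establishing this claim is the main obstacle, and it is exactly where the factor $2$ enters. When $h_0\geq d_0/2$ there is nothing to prove, so one may assume $h_0<d_0/2$ and aim for a contradiction. Having no coloop square forces the number of distinct columns to strictly exceed $d_0+h_0$. On the other hand, having no coloop column means every column lies in a circuit, and each circuit $\sum_t a_t c^{(v_t)}=0$ produces a \emph{nonzero} element of the $h_0$-dimensional kernel $K$ — here one uses that the $c^{(v_t)}$ are pairwise non-proportional $0/1$ vectors, so $\sum_t a_t\,c^{(v_t)}\otimes c^{(v_t)}$ cannot vanish. Feeding this circuit/kernel correspondence into the Veronese constraint of (ii), and using that a $0/1$ vector is rigidly determined by its off-diagonal Veronese image (so that only a controlled number of distinct $0/1$ columns can be ``Veronese-dependent''), I would show that in this regime $m_0\leq d_0+h_0$, contradicting the previous sentence. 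Turning the qualitative ``circuit $\to$ kernel vector'' mechanism into this clean quantitative bound, with the correct constant, is the technical heart of the argument; the rest is bookkeeping.
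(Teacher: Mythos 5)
Your translation to the column picture is sound, and the reduction steps (i) and the peeling moves (a)--(b) are correct: deleting a coloop column drops $d$ by one and keeps $h$, and deleting a coloop square keeps $d$ and drops $h$ by one, so after the process stalls you do indeed need to handle a residual configuration with no coloop column and no coloop square. But you have essentially pushed all of the content of the theorem into the final ``key claim'' $d_0\leq 2h_0$, and that claim is left unproved. The sketched route to it does not close: you want to show $m_0\le d_0+h_0$ from ``no coloop column, $h_0<d_0/2$'' and contradict $m_0>d_0+h_0$, but the mechanism you invoke --- that a $0/1$ column is rigidly determined by its off-diagonal Veronese image, and the Veronese images lie in an $h_0$-dimensional \emph{subspace} --- does not bound the number of distinct columns at all when $\mathbb{F}$ is infinite (a subspace of dimension $h_0\geq 1$ contains infinitely many points). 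Even over a finite field, the bound you would extract is far from $m_0\le d_0+h_0$. Similarly, the step ``a circuit $\sum_t a_t c^{(v_t)}=0$ produces a nonzero kernel element $\sum_t a_t\,c^{(v_t)}\otimes c^{(v_t)}$'' is plausible for $0/1$ columns but is not obvious and is not established. So there is a genuine gap, and you acknowledge it yourself.

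For comparison with the paper: the published proof does not peel off coordinates, and it obtains the factor $2$ through a different, explicitly constructive mechanism. It chooses a basis $v_1,\dots,v_d$ of $V=\langle\mathcal{F}\rangle$ in row-reduced form, identifies the ``nearly-pivot'' coordinate blocks $S_i$, and proves a concrete linear-algebra lemma (Claim~2.3): for every column $c$ in a chosen basis $C$ of the residual block $B'$, one can express $\mathds{1}_c+\alpha\mathds{1}_{c'}$ --- a vector of support at most $2$ --- as a coordinatewise product of two elements of $V$ with disjoint support structure. This produces vectors $z_c\in V\cdot V$ supported on $B'$, and a short pigeonhole argument then gives $\dim\langle z_c\rangle\ge r/2$, which is exactly where the constant $2$ enters. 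Together with the auxiliary vectors $v_i'=sv_i-v_i\cdot v_i$ that correct the $S_i$ which are not genuine twin blocks, a single dimension count yields $|I|+r/2\le h$ and hence $\dim V|_B\le 2h$. Your reduction is not wrong, but it has not made the hard step easier: the stalled configuration with no coloops is essentially the general case in disguise, and you would still need something like Claim~2.3 (or an equally quantitative substitute) to finish.
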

	
	\section{Small `doubling' and twins}
	
	In this section we establish Theorem \ref{cor:stability}. We will actually prove a more general statement about arbitrary vector spaces. Let us introduce some notation. 
	
	As usual, $\mathbb{Z}_{\ell}$ denotes the ring of integers modulo $\ell$, and if $p$ is a prime, we write $\mathbb{F}_p$ instead of $\mathbb{Z}_p$ to emphasize that it is also a field. Fix any commutative ring $\mathcal{R}$ with a unity  (in our case, $\mathcal{R}$ will be either a field or  $\mathbb{Z}_\ell$ for some positive integer $\ell$). For a vector $v \in \mathcal{R}^n$, we use $v(i)$ to denote the $i$th coordinate of $v$. The {\em support} of $v$ is $\{i \in [n] : v(i) \neq 0\}$. For $\mathcal{F}\subset \mathcal{R}^{n}$, we use $\langle \mathcal{F}\rangle$ to denote the \emph{span} (i.e. the set of all linear combinations) of the elements of $\mathcal{F}$. If $\mathcal{R}=\mathbb{Z}_{\ell}$, we might write $\langle \mathcal{F}\rangle_{\ell}$ instead of $\langle \mathcal{F}\rangle$ if $\mathcal{R}$ is not clear from the context. If $A\subset [n]$ and $v \in \mathcal{R}^n$, then $v|_{A}\in \mathcal{R}^{A}$ denotes the restriction of $v$ to the coordinates in $A$, and if $\mathcal{F}\subset \mathcal{R}^{n}$, then $\mathcal{F}|_{A}=\{v|_{A}: v \in \mathcal{F}\}$.
	
	Given vectors $v,w \in \mathcal{R}^n$, let $v\cdot w$ be the vector in $\mathcal{R}^n$ defined as $(v\cdot w)(i)=v(i)w(i)$ for $i\in [n]$. Note that if $v$ and $w$ are characteristic vectors of sets $A$ and $B$, then $v\cdot w$ is the characteristic vector of $A\cap B$. 
	For $V,W \subset \mathcal{R}^{n}$, let $V \cdot W = \{v \cdot w : v \in V, w \in W\}$. 
	Given $V \subset \mathcal{R}^{n}$ and $i,j\in [n]$, say that $i$ and $j$ are \emph{twins} for $V$ if  $v(i)=v(j)$ for all $v\in V$ and $v(i)\neq 0$ for at least one $v\in V$. Observe that if $V$ is a subspace generated by some family $\mathcal{F} \subset \{0,1\}^n$, then this definition of twins exactly coincides with the one given in the previous section.

	\begin{theorem}\label{lemma:stability}
		Let $\mathbb{F}$ be a field, $V<\mathbb{F}^{n}$, $d=\dim(V)$ and 
		$\dim(\langle V\cup (V\cdot V)\rangle)=d+h$. Then $[n]$ can be partitioned into $d+1$ sets $A_1,\dots,A_{d},B$ such that $A_{i}$ is a maximal set of twins for $V$ for each $i\in [d]$, and $\dim(V|_{B})\leq 2h$. 
	\end{theorem}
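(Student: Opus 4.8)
The plan is to pass to the ``column'' description of $V$ and then to a Veronese‑type reformulation. Fix a basis $v_1,\dots,v_d$ of $V$ and for $i\in[n]$ let $c_i=(v_1(i),\dots,v_d(i))\in\mathbb F^d$ be the $i$‑th column; then $i,j$ are twins iff $c_i=c_j\neq 0$, so the maximal sets of twins are exactly the fibres of $i\mapsto c_i$ over the nonzero values, and the degenerate coordinates ($c_i=0$) can be dumped into $B$ with no effect on $\dim V|_B$. A short induction on $n$ lets me further assume that every twin class is a singleton: if $i$ has a twin $j$, delete coordinate $i$ (this preserves $d$, $h$ and the twin structure, since any $w\in\langle V\cup V\cdot V\rangle$ still satisfies $w(i)=w(j)$), solve the smaller instance, and reinsert $i$ into whichever part contains $j$. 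So I may assume $c_1,\dots,c_n$ are distinct, nonzero and span $\mathbb F^d$, and the goal becomes: choose $d$ of them to be $A_1,\dots,A_d$ so that the remaining ones span a space of dimension $\le 2h$. If $d\le 2h$ this is trivial ($\dim V|_B\le d$ for any choice), so assume $d>2h$.

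Now reformulate $h$. With $\nu(c)=(c,\,c\otimes c)\in\mathbb F^d\oplus\operatorname{Sym}^2(\mathbb F^d)$, the $i$‑th column of the stacked matrix whose rows are a basis of $V$ together with all pointwise products $v_a\cdot v_b$ is precisely $\nu(c_i)$, so $\dim\langle V\cup V\cdot V\rangle=\dim\operatorname{span}\{\nu(c_i):i\in[n]\}=d+h$. Pick $d$ columns forming a basis of $\mathbb F^d$ — these will be the atoms $A_1,\dots,A_d$ — and change coordinates so they equal $e_1,\dots,e_d$. Writing $W:=\operatorname{span}\{\nu(c_i)\}$ and $K:=W\cap(\{0\}\oplus\operatorname{Sym}^2(\mathbb F^d))$, the surjection $W\to\mathbb F^d$ (first coordinate) gives $\dim K=h$ and $W=P\oplus K$ with $P=\operatorname{span}\{\nu(e_1),\dots,\nu(e_d)\}$. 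Splitting $\nu(c_i)$ along $P\oplus K$ shows its $K$‑component is $\bigl(0,\ c_i\otimes c_i-\operatorname{diag}(c_i)\bigr)$, where $\operatorname{diag}(c)=\sum_s c(s)e_{ss}$. Hence $\tilde q(c_i):=c_i\otimes c_i-\operatorname{diag}(c_i)\in K$ for every $i$; moreover $\tilde q(c)=0$ iff $c\in\{0,e_1,\dots,e_d\}$, so the $n-d$ columns not among the chosen basis satisfy $\tilde q(c_i)\in K\setminus\{0\}$. Since $\dim V|_B=\dim\operatorname{span}\{c_i:i\in B\}$ for $B$ the complement of the chosen columns, picking $k:=\dim V|_B$ linearly independent such columns $w_1,\dots,w_k$ reduces the theorem to the following.

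\emph{Crux Lemma.} If $w_1,\dots,w_k\in\mathbb F^d$ are linearly independent and none lies in $\{0,e_1,\dots,e_d\}$, then $\dim\operatorname{span}\{\tilde q(w_1),\dots,\tilde q(w_k)\}\ge k/2$. Granting this, $k/2\le\dim K=h$, i.e. $\dim V|_B\le 2h$. The factor $2$ is genuine: linearly independent vectors with proportional $\tilde q$‑images exist — e.g. $(2,-1,0,\dots)$ and $(3,-2,0,\dots)$ lie on one affine line and have $\tilde q$ a scalar multiple of $e_{11}+e_{22}-e_{12}$ — and block sums of such pairs force equality, which is exactly why the bound is $2h$ and not $h$. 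To prove the lemma I would study the kernel $N=\{\lambda\in\mathbb F^k:\sum_l\lambda_l\tilde q(w_l)=0\}=\{\lambda: P\operatorname{diag}(\lambda)P^{T}=\operatorname{diag}(P\lambda)\}$ with $P=[\,w_1\mid\cdots\mid w_k\,]$, aiming for $\dim N\le k/2$. Multiplying the identity $P\operatorname{diag}(\lambda)P^T=\operatorname{diag}(P\lambda)$ on the right by the all‑ones vector forces $\lambda_l(\sigma_l-1)=0$ where $\sigma_l$ is the coordinate sum of $w_l$, so one reduces to the case that every $w_l$ has coordinate sum $1$; multiplying on the right by each $e_s$, and reducing to $P$ square and invertible, yields the relations $\lambda\odot v^{(s)}=\langle\lambda,v^{(s)}\rangle\,c^{(s)}$, where $v^{(s)}$ is the $s$‑th row of $P$ and $c^{(s)}$ the $s$‑th column of $P^{-1}$ — i.e. each $c^{(s)}$ is an eigenvector of $\operatorname{diag}(\lambda)\,P^{T}P$.

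The main obstacle is precisely this last step: converting the eigenvector relations — together with the coordinate‑sum‑$1$ and no‑$e_s$ constraints, which are what rule out the ``$e_s$'' degeneracies that would otherwise destroy the bound — into the clean inequality $\dim N\le k/2$. Everything preceding it (the reductions to singleton twin classes, the Veronese reformulation, and the passage from $W$ to $\tilde q(c_i)\in K$) is routine linear algebra. I expect the lemma to follow from an extremal/inductive argument showing that the configurations making $\dim N$ large are block‑diagonal unions of the two‑dimensional line examples above.
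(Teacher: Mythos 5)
Your reductions are all correct, and the Veronese reformulation is clean: passing to columns, collapsing twins to singletons, identifying $\dim\langle V\cup V\cdot V\rangle$ with $\dim\operatorname{span}\{\nu(c_i)\}$, the splitting $W=P\oplus K$ with $\dim K=h$, and the observation that each non-basis column $c$ contributes $\tilde q(c)=c\otimes c-\operatorname{diag}(c)\in K$, with $\tilde q(c)=0$ iff $c\in\{0,e_1,\dots,e_d\}$. Your ``Crux Lemma'' does indeed imply the theorem, and it is true (it can be read off the paper's proof in your normalized setting).

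The gap is that the Crux Lemma is not proved, and it is precisely where all the content of the theorem lives. The algebra you carry out---$P\operatorname{diag}(\lambda)P^{T}=\operatorname{diag}(P\lambda)$, the deduction $\lambda_l(\sigma_l-1)=0$, the eigenvector relations---does narrow the kernel, but it does not by itself produce the bound $\dim N\le k/2$; and the ``reduction to $P$ square and invertible'' is not available when $k<d$ (and if you restrict to $\operatorname{span}\{w_1,\dots,w_k\}$, the constraint ``$w_l\notin\{0,e_1,\dots,e_d\}$'', which you correctly identify as essential, loses its meaning because the $e_s$ need not lie in that subspace). So the plan stops at exactly the step that requires an idea, which you acknowledge.

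For comparison, the paper avoids an abstract rank bound on the Veronese image and instead explicitly constructs, for each chosen irregular column $c$, a product $z_c=X\cdot Y\in V\cdot V$ with $X=\sum_i x(i)v_i$, $Y=\sum_i y(i)v_i$, where $x,y\in\mathbb{F}^d$ have disjoint supports. Disjointness of supports is the crucial device: it forces $X\cdot Y$ to vanish on every sibling block $S_i$, so $z_c$ lives entirely over the irregular coordinates (this is the concrete analogue of ``landing in $K$''). The accompanying case analysis (three cases depending on whether the relevant index $k\in K$ lies inside the chosen column basis and whether $\lambda_{c,k}$ vanishes) produces $z_c$ whose restriction to $C$ is $\mathds{1}_c+\alpha\mathds{1}_{c'}$, i.e.\ at most 2-sparse; a short pigeonhole then gives $\dim\operatorname{span}\{z_c\}\ge r/2$. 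This is what replaces your unproven inequality $\dim N\le k/2$. So if you want to complete your argument along your own lines you still need to prove the Crux Lemma; the paper's disjoint-support construction and three-case analysis is one way to do so, and I don't see that the ``eigenvector'' route you sketch is any shorter.

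Two minor points worth recording: (i) after collapsing twins you also need to discard scalar-multiple columns (the paper's ``siblings''), which is where the set $I$ and the vectors $v_i'$ enter the paper's accounting and where the final $r+|I|\le 2h$ comes from; in your normalized setting this is hidden inside the Crux Lemma, where parallel columns $\lambda e_i$ show up as the ``no $e_s$'' degeneracies. (ii) Your bound as stated would give $\dim V|_B\le 2h$ for the \emph{specific} choice $A_i=\{e_i\}$; the theorem only asks for existence of some choice, so this is fine, but be aware you are proving a slightly stronger assertion than what is strictly needed.
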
 
	
	\begin{proof}
		For $i,j\in [n]$, say that $i$ and $j$ are \emph{siblings} for $V$ if there exists $\lambda\in\mathbb{F},\lambda\neq0$ such that $v(i)=\lambda v(j)$ for all $v\in V$, and $v(i)\neq 0$ for at least one $v\in V$. 
		Equivalently, $i$ and $j$ are siblings if for every basis $V_0$ of $V$ there exists $\lambda\in\mathbb{F},\lambda\neq0$ such that $v(i)=\lambda v(j)$ for all $v\in V_0$, and $v(i)\neq 0$ for at least one $v\in V_0$ (in which case $\lambda$ will be the same for all bases of $V$).  Note that if $\lambda = 1$, then $i$ and $j$ are twins.
		
		Let $v_1,\dots,v_d\in V$ be a basis of $V$, and let $M$ be the $d\times n$ matrix, whose rows are $v_1,\dots,v_d$. It is possible to choose the basis $v_1,\dots,v_d$ such that after possibly rearranging the columns of $M$, the restriction of $M$ to the first $d$ columns is a diagonal matrix. 
		
		We can assume without loss of generality that $M$ has no all-zero columns, i.e. that there is no coordinate $i \in [n]$ such that $v(i) = 0$ for all $v \in V$. Indeed, removing such an index does not change the dimension of $V$ and $\langle V\cup (V\cdot V)\rangle$, and we may include it in $B$ without increasing the dimension of $V|_{B}$.
		
		Note that the indices $i\in [d]$ and $j\in [n]\setminus [d]$ are siblings for $V$ if and only if $v_i(j)\neq 0$ and the $j$-th column of $M$ contains exactly one nonzero entry (namely, the entry $v_i(j)$). For $i\in [d]$, let $S_i$ contain all the siblings of $i$, and let $B'=[n]\setminus (S_1\cup\dots\cup S_d)$. Note that for every $j\in B'$, the $j$-th column of $M$ contains at least two nonzero entries.
		
		Let $r=\dim(V|_{B'})$. Choose some subset $C\subset B'$ such that the columns of $C$ form a basis of the vector space spanned by the columns of $B'$. Then $|C|=r$ and $\dim(V|_{C})=r$ (because the row space has the same dimension as the column space). See Figure \ref{fig1} for an illustration of the matrix $M$ and sets $S_1,\dots,S_d,B',C$. Furthermore, let $w_{i}=v_{i}|_{C}$ for $i\in [d]$, and observe that $w_1,\dots,w_d$ spans $\mathbb{F}^{C}$ (because $|C| = r$ and $\dim(V|_{C})=r$). For $c\in C$, let $\mathds{1}_c\in\mathbb{F}^{C}$ be the characteristic vector of the single element set $\{c\}$.
		
		\begin{claim}\label{claim:c'}
			For every $c\in C$ there exists $\alpha\in \mathbb{F}$, $c'\in C, c'\neq c$, and two vectors of coefficients $x,y\in \mathbb{F}^{d}$ such that $x$ and $y$ have disjoint supports and 
			$$\mathds{1}_{c}+\alpha\mathds{1}_{c'}=\left(\sum_{i=1}^d x(i)w_i\right)\cdot \left(\sum_{i=1}^d y(i)w_i\right).$$
		\end{claim}	
		\noindent
		Here, as always, $\cdot$ denotes the coordinate-wise product of vectors.
		
		\begin{figure}
			\centering
			\begin{tikzpicture}
				\node at (-3.5,0.7) {$v_1$};
				\node at (-3.5,0.233) {$v_2$};
				\node at (-3.5,-0.233) {$v_3$};
				\node at (-3.5,-0.7) {$v_4$};
				\matrix at (0,0) [matrix of math nodes,left delimiter=(,right delimiter=),row sep=0cm,column sep=0cm] (m) {
					1 &   &   &   &   & 2 &   &   &   & 2 &   & 1\\
					& 1 &   &   & 2 & 1 &   & 2 & 1 &   & 2 & 2\\
					&   & 1 &   &   &   &   & 1 & 1 &   &   & 1\\
					&   &   & 1 &   &   & 1 &   & 1 &   &   & 2\\};
				\draw[dashed] (-0.9,-1) -- (-0.9,1);
				\matrix (m) at (8,0) [matrix of math nodes,left delimiter=(,right delimiter=),row sep=0cm,column sep=0cm] (m) {
					1 & 2 &   &   &   &   &   &   & 2 &   &   & 1\\
					&   & 1 & 2 & 2 &   &   &   & 1 & 2 & 1 & 2\\
					&   &   &   &   & 1 &   &   &   & 1 & 1 & 1\\
					&   &   &   &   &   & 1 & 1 &   &   & 1 & 2\\};
				\draw[dashed] (8.9,-1) -- (8.9,1);
				\draw[dashed] (10.25,-1) -- (10.25,1);
				\draw [thick,decorate,decoration={calligraphic brace,amplitude=4pt}] (5.4,1) -- (6.2,1);
				\node at (5.8,1.5) {$S_1$};
				\draw [thick,decorate,decoration={calligraphic brace,amplitude=4pt}] (6.4,1) -- (7.45,1);
				\node at (6.925,1.5) {$S_2$};
				\node at (7.8,1.5) {$S_3$};
				\draw [thick,decorate,decoration={calligraphic brace,amplitude=4pt}] (8.1,1) -- (8.8,1);
				\node at (8.45,1.5) {$S_4$};
				\draw [thick,decorate,decoration={calligraphic brace,amplitude=4pt}] (8.9,1) -- (10.25,1);
				\node at (9.425,1.5) {$C$};
				\draw [thick,decorate,decoration={calligraphic brace,amplitude=4pt}]  (10.6,-1) -- (8.9,-1);
				\node at (9.6,-1.5) {$B'$};
			\end{tikzpicture}
			\caption{On the left, a matrix $M$ over $\mathbb{F}_3$, where blank entries denote 0. On the right, we rearranged the columns of $M$ for better visualization.}
			\label{fig1}
		\end{figure}

		\begin{proof}
			Let $K\subset [d]$ be the set of indices $i \in [d]$ such that $w_{i}(c)\neq 0$. Recall that $|K|\geq 2$, as every column indexed by an element of $C\subset B'$ contains at least two nonzero entries. As $w_1,\dots,w_d$ span $\mathbb{F}^{C}$, we can choose an $r$-element subset of $\{w_1,\dots,w_d\}$ forming a basis of $\mathbb{F}^{C}$. Without loss of generality, suppose that $w_1,\dots,w_r$ is a basis. Then, for every $u\in C$, we can write $\mathds{1}_u=\sum_{i=1}^r\lambda_{u,i} w_i$ with suitable $\lambda_{u,1},\dots,\lambda_{u,r}\in\mathbb{F}$. Consider three cases.
			
			\begin{description}
				\item[Case 1.] There exists $k\in K$ such that $k\not\in [r]$. 
				
				Recall that, by definition of $K$, $w_k(c)\not =0$. Set $\alpha=0$, $x=(\lambda_{c,1},\dots,\lambda_{c,r},0,\dots,0) \in \mathbb{F}^d$, and define $y \in \mathbb{F}^d$ as 
				$$y(i)=\begin{cases}\frac{1}{w_k(c)} &\mbox{if }i=k,\\
					0                &\mbox{otherwise}.
				\end{cases}$$
				Then $\sum_{i=1}^d x(i)w_i=\mathds{1}_{c}$ and $\sum_{i=1}^d y(i)w_i=\frac{1}{w_k(c)}w_k$, which implies that $(\sum_{i=1}^d x(i)w_i)\cdot (\sum_{i=1}^d y(i)w_i)=\mathds{1}_c$. Furthermore, the support of $x$ is contained in $[r]$, while the support of $y$ is $\{k\}$, so $x$ and $y$ have disjoint supports. We see that $\alpha,x,y,c'$ satisfy the requirements for any choice of $c'\in C$, $c'\neq c$. 
				
				\item[Case 2.] $K\subset [r]$ and there exists $k\in K$ such that $\lambda_{c,k}=0$. 
				
				Define $\alpha,x,y$ the same way as in the previous case. Note that the only difference is that the support of $x$ is now contained in $[r]\setminus\{k\}$, so $x$ and $y$ still have disjoint supports. Therefore, $\alpha,x,y,c'$ suffice for any $c'\in C$, $c'\neq c$.
				
				\item[Case 3.] $K\subset [r]$ and $\lambda_{c,k}\neq 0$ for every $k\in K$.
				
				We claim that there exists some $c'\in C,c'\neq c$ such that not all $|K|$ coefficients $\lambda_{c',k}$ for $k\in K$ vanish. 
				Indeed, suppose by contradiction that $\lambda_{c',k} = 0$ for all $k\in K$ and $c' \in C \setminus \{c\}$. By the definition of the coefficients $\lambda_{c',i}$, this means that $\mathds{1}_{c'}$ belongs to the span of $(w_j)_{j \in [r] \setminus K}$ for every $c' \in C \setminus \{c\}$. But the $r-1$ vectors $(\mathds{1}_{c'})_{c'\in C \setminus \{c\}}$ are linearly independent, while $r - |K| \leq r-2$, so this is impossible. 
				
				Choose $c' \in C \setminus \{c\}$ and $k\in K$ such that $\lambda_{c',k}\neq 0$. Let $\beta=-\lambda_{c,k}/\lambda_{c',k}$, 
				$$x=(\lambda_{c,1}+\beta\cdot \lambda_{c',1},\dots,\lambda_{c,r}+\beta\cdot\lambda_{c',r},0,\dots,0),$$ and define $y$ as
				$$y(i)=\begin{cases}\frac{1}{w_k(c)} &\mbox{if }i=k,\\
					0                &\mbox{otherwise}
				\end{cases}.$$
				Then $\sum_{i=1}^d x(i)w_i=\mathds{1}_c+\beta \mathds{1}_{c'}$ and $\sum_{i=1}^d y(i)w_i=\frac{1}{w_k(c)}w_k$. Therefore, we have 
				$$\left(\sum_{i=1}^d x(i)w_i\right)\cdot \left(\sum_{i=1}^d y(i)w_i\right)=\mathds{1}_c+\frac{\beta w_k(c')}{w_k(c)}\mathds{1}_{c'}.$$ Also, by the choice of $\beta$, we have $x(k)=0$, so the support of $x$ is contained in $[r]\setminus\{k\}$, while the support of $y$ is $\{k\}$. Therefore, $c',\alpha=\frac{\beta w_k(c')}{w_k(c)},x,y$ satisfy the requirements of the claim.
			\end{description}
			This completes the proof of Claim \ref{claim:c'}. See Figure \ref{fig2} for an illustration of $M|_{C}$ and the three cases.
		\end{proof}
		
		\begin{figure}
			\centering
			\begin{tikzpicture}
				\matrix at (0,0) [matrix of math nodes,left delimiter=(,right delimiter=),row sep=0cm,column sep=0cm] {
					& 1 & 1 \\
					1 & 1 & 2 \\
					2 &   & 1 \\
					2 &   &   \\
					1 &   &   \\
					2 &   &   \\};
				\draw[dashed] (-1,0) -- (1,0);
				\node at (-1.5,1.15) {$w_1$};
				\node at (-1.5,0.2) {$w_r$};
				\node at (-1.5,-0.2) {$w_{r+1}$};
				\node at (-1.5,-1.15) {$w_{d}$};
				\node at (-0.45,1.8) {$c_1$};
				\node at (0,1.8) {$c_2$};
				\node at (0.45,1.8) {$c_3$};
				\draw [thick,decorate,decoration={calligraphic brace,amplitude=4pt}] (0.9,-1.7) -- (-0.9,-1.7);
				\node at (0,-2) {$C$};
				
				\node[anchor=west] at (3,-0.6) {\large $\mathds{1}_{c_3}=w_1+2w_2+2w_3$};
				\node[anchor=west] at (3,0.6) {\large $\mathds{1}_{c_1}=w_1+2w_2+w_3$};
				\node[anchor=west] at (3,0) {\large $\mathds{1}_{c_2}=\ \ \ \ \ \ \ \ \ \ w_2+w_3$};
			\end{tikzpicture}
			\caption{An example of the matrix $M|_{C}$ in the field $\mathbb{F}_3$, where blank entries denote 0. The column $c=c_1$ falls into Case 1., $c_2$ falls into Case 2., $c_3$ falls into Case 3.}
			\label{fig2}
		\end{figure}
		
		For $c\in C$, let $x,y\in\mathbb{F}^{d}$ be the vectors of coordinates  satisfying the conditions of the previous claim. Also, let $X=\sum_{i=1}^d x(i)v_i$, $Y=\sum_{i=1}^d y(i)v_i$, and set $z_{c}=X\cdot Y$. Then $z_{c}\in V\cdot V$ and $z_{c}|_{C}=\mathds{1}_c+\alpha\mathds{1}_{c'}$ for some $c'\in C,c'\neq c$ and $\alpha\in \mathbb{F}$, as guaranteed by Claim \ref{claim:c'}. Observe that the support of $X$ on $[n]\setminus B'$ is $\bigcup_{i\in [d], x(i)\neq 0}S_i$, while the support of $Y$ on $[n]\setminus B'$ is $\bigcup_{i\in [d], y(i)\neq 0}S_i$ (indeed, recall that $[n]\setminus B'$ is the union of $S_1,\dots,S_d$, and every column in $S_i$ has a non-zero entry in the $i$th row and zeros in all other rows).
		Hence, as $x$ and $y$ have disjoint supports, $X$ and $Y$ have disjoint supports on $[n]\setminus B'$. This means that $z_c=X\cdot Y$ vanishes on $[n]\setminus B'$.
		
		 Let $W$ be the vector-space generated by the vectors $(z_c)_{c\in C}$, and let us record that $W$ vanishes on $[n]\setminus B'$.
		\begin{claim}
			$\dim(W)\geq r/2.$
		\end{claim}
		\begin{proof}
			Suppose that $\dim(W) < r/2$, and without loss of generality, let $D\subset C$ such that $\{z_c\}_{c\in D}$ is a basis of $W$ and $|D|<r/2 = |C|/2$. 
			Recall that for each $c \in C$, $z_{c}|_C = \mathds{1}_c+\alpha\mathds{1}_{c'}$ for some $c'\in C \setminus \{c\}$ and $\alpha\in \mathbb{F}$.
			As $z_{c}|_C$ vanishes on all but at most two coordinates, there exists $u\in C$ such that $z_c$ vanishes on $u$ for every $c\in D$. But $z_u$ does not vanish on $u$, so $z_u$ is not in the span of $\{z_c\}_{c\in D}$, contradicting that $\{z_c\}_{c\in D}$ is a basis for $W$.
		\end{proof}
		
		Let $I$ be the set of indices $i \in [d]$ such that the entries in $v_i|_{S_i}$ are {\em not} all equal, that is, $S_i$ is not a set of twins. For $i\in [d]$, let $A_{i}$ be a maximal set of twins in $S_{i}$, then $S_{i}\neq A_{i}$ if and only if $i\in I$. If $i\in I$, and $v_i|_{A_i}$ is the constant $s$ vector, then define $v_{i}'=sv_{i}-v_{i}\cdot v_{i}\in \langle V\cup (V\cdot V)\rangle$. Observe that $v_{i}'$ vanishes on $A_{i}$ and on $S_{j}$ for every $j\in [d]\setminus\{i\}$, and $v_{i}'$ does not vanish on $S_{i}\setminus A_{i}$. 
		Crucially, this implies that the $d+|I|$ vectors $v_1,\dots,v_{d},\{v_{i}'\}_{i\in I}$ are not only linearly independent, but their restrictions on $[n]\setminus B'$ are also linearly independent. 
		This means that setting $V'=\langle \{v_{i}:i\in [d]\}\cup\{v_{i}':i\in I\}\rangle$, we have $\dim(V')=d+|I|$, and the only element of $V'$ vanishing on $[n]\setminus B'$ is $\mathbf{0}$. On the other hand, every element of $W$ vanishes on $[n]\setminus B'$, which gives $W\cap V'=\{\mathbf{0}\}$.  See Figure \ref{fig3} for an illustration of the vectors $v_1,\dots,v_d,\{v'_i\}_{i\in I},\{z_c\}_{c\in C}$.
		
		But then as $V'+W < \langle V\cup (V\cdot V)\rangle$, we have
		$$\dim(\langle V\cup (V\cdot V)\rangle)\geq\dim (V'+W)=\dim(V')+\dim(W)\geq d+|I|+r/2.$$
		On the other hand, $\dim(\langle V\cup (V\cdot V)\rangle) = d+h$ by assumption.
		Therefore, $|I|+r/2\leq h$. Let $B=B'\cup\bigcup_{i\in I}(S_{i}\setminus A_{i})$, then $\dim(V|_{B})\leq r+|I|\leq 2h$, so the sets $A_{1},\dots,A_{d},B$ satisfy the desired properties.
	\end{proof}
	
	\begin{figure}
		\centering
		\begin{tikzpicture}
			\node at (4.6,1.8) {$v_1$};
			\node at (4.6,1.333) {$v_2$};
			\node at (4.6,0.866) {$v_3$};
			\node at (4.6,0.4) {$v_4$};
			\node at (4.6,-0.066) {$z_{c_1}$};
			\node at (4.6,-0.533) {$z_{c_2}$};
			\node at (4.6,-1) {$z_{c_3}$};
			\node at (4.6,-1.466) {$v_{1}'$};
			\node at (4.6,-1.933) {$v_2'$};
			\draw[rounded corners,color=white,fill=white!80!red] (5.3,2.3) rectangle (5.75,-2.3);
			\draw[rounded corners,color=white,fill=white!80!blue] (6.7,2.3) rectangle (7.5,-2.3);
			\draw[rounded corners,color=white,fill=white!80!green] (7.55,2.3) rectangle (8,-2.3);
			\draw[rounded corners,color=white,fill=white!80!brown] (8.02,2.3) rectangle (8.82,-2.3);
			\node at (5.525,-2.9) {$A_1$};
			\node at (7.18,-2.9) {$A_2$};
			\node at (7.81,-2.9) {$A_3$};
			\node at (8.42,-2.9) {$A_4$};
			\draw [thick,decorate,decoration={calligraphic brace,amplitude=4pt}] (7.5,-2.4) -- (6.7,-2.4);
			\draw [thick,decorate,decoration={calligraphic brace,amplitude=4pt}] (8.82,-2.4) -- (8.02,-2.4);
			
			\matrix at (8,0) [matrix of math nodes,left delimiter=(,right delimiter=),row sep=0cm,column sep=0cm] {
				1 & 2 &   &   &   &   &   &   & 2 &   &   & 1\\
				&   & 1 & 2 & 2 &   &   &   & 1 & 2 & 1 & 2\\
				&   &   &   &   & 1 &   &   &   & 1 & 1 & 1\\
				&   &   &   &   &   & 1 & 1 &   &   & 1 & 2\\
				&   &   &   &   &   &   &   & 1 &   &   & 1\\
				&   &   &   &   &   &   &   &   & 1 & 2 & 1\\
				&   &   &   &   &   &   &   &   & 2 & 1 & 2\\	  
				& 1 &   &   &   &   &   &   & 1 &   &   &  \\
				&   &  1 &   &  &   &   &   & 1 &   & 1 &  \\	  
			};
			
			\draw[dashed] (8.9,-2.3) -- (8.9,2.3);
			\draw[dashed] (10.25,-2.3) -- (10.25,2.3);
			\draw[dashed] (5.2,0.23) -- (10.8,0.23);
			\draw[dashed] (5.2,-1.3) -- (10.8,-1.3);
			\draw [thick,decorate,decoration={calligraphic brace,amplitude=4pt}] (5.3,2.4) -- (6.2,2.4);
			\node at (5.75,2.9) {$S_1$};
			\draw [thick,decorate,decoration={calligraphic brace,amplitude=4pt}] (6.4,2.4) -- (7.45,2.4);
			\node at (6.925,2.9) {$S_2$};
			\node at (7.81,2.9) {$S_3$};
			\draw [thick,decorate,decoration={calligraphic brace,amplitude=4pt}] (8.02,2.4) -- (8.82,2.4);
			\node at (8.42,2.9) {$S_4$};
			\node at (9.1,2.9) {$c_1$};
			\node at (9.525,2.9) {$c_2$};
			\node at (9.95,2.9) {$c_3$};
			\draw [thick,decorate,decoration={calligraphic brace,amplitude=4pt}]  (10.6,-2.4) -- (8.9,-2.4);
			\node at (9.75,-2.9) {$B'$};
			\draw [thick,decorate,decoration={calligraphic brace,amplitude=4pt}] (11.3,2.3) -- (11.3,0.2);
			\node at (12.8,1.25) {$M$};
		\end{tikzpicture}
		\caption{An example of a matrix $M$ over $\mathbb{F}_3$, together with the vectors $z_{c}$ for $c\in C=\{c_1,c_2,c_3\}$, and the vectors $v_{i}'$ for $i\in I$.}
		\label{fig3}
	\end{figure}

	\section{\texorpdfstring{$k$}{k}-closed families are atomic}
	
	In this section, we prove a theorem which implies Theorem \ref{thm:main0} after a small amount of work. Before stating it, we need some additional notation.  Say that $\mathcal{F}$ is \emph{non-reducible} if $\mathcal{F}$ does not vanish on any of the coordinates (namely, if there is no $i$ such that $v(i) = 0$ for all $v \in \mathcal{F}$). Recall that $\mathcal{F} \cdot \mathcal{F}$ is the set of all $v \cdot w$, $v,w \in \mathcal{F}$, where $v \cdot w$ is the coordinate-wise product. We also put $v^k = v \cdot \dotsc\cdot v$ and $\mathcal{F}^{k}=\mathcal{F}\cdot \dotsc \cdot \mathcal{F}$, where the products contain $k$ terms. Finally, let $||v||=\sum_{i=1}^{n}v(i)$. 
	
	We say that a set $\mathcal{F}\subset \mathbb{Z}^{n}_{\ell}$ is \emph{$k$-closed} if $||v||=0$ for every $1\leq i\leq k$ and $v\in \mathcal{F}^{i}$. Note that if $\mathcal{F} \subset \{0,1\}^n$, then this is the same as saying that the intersection of any $k$ not necessarily distinct sets from $\mathcal{F}$ is divisible by $\ell$.  Let us collect some simple properties of the coordinate-wise product and $k$-closedness.
	
	\begin{claim}\label{claim:k-closed}
		\begin{enumerate}
			
			\item  If $\mathcal{F},\mathcal{F}'\subset \mathbb{Z}_{\ell}^{n}$, then $\langle \mathcal{F}\rangle\cdot \langle \mathcal{F}'\rangle\subset \langle \mathcal{F}\cdot\mathcal{F}'\rangle$.
			
			\item 	If $\mathcal{F}\subset \mathbb{Z}_{\ell}^{n}$ is $k$-closed, then $\langle \mathcal{F}\rangle$ is also $k$-closed.
			
			\item If $\mathcal{F}$ is $k$-closed, then $\mathcal{F}\cdot\mathcal{F}$ is $\lfloor k/2\rfloor$-closed.
		\end{enumerate}
	\end{claim}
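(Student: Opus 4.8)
The plan is to verify the three parts directly, using only the bilinearity of the coordinate-wise product $\cdot$, the linearity of the functional $v\mapsto\|v\|$ on $\mathbb{Z}_\ell^n$, and the fact that $\cdot$ is associative and commutative.

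For part (1), I would take arbitrary $v\in\langle\mathcal{F}\rangle$ and $w\in\langle\mathcal{F}'\rangle$, write $v=\sum_s a_s f_s$ and $w=\sum_t b_t g_t$ with $f_s\in\mathcal{F}$, $g_t\in\mathcal{F}'$ and coefficients $a_s,b_t\in\mathbb{Z}_\ell$, and expand coordinatewise: $v\cdot w=\sum_{s,t} a_s b_t\,(f_s\cdot g_t)$. Since each $f_s\cdot g_t$ lies in $\mathcal{F}\cdot\mathcal{F}'$, the right-hand side is a linear combination of elements of $\mathcal{F}\cdot\mathcal{F}'$, hence lies in $\langle\mathcal{F}\cdot\mathcal{F}'\rangle$.

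For part (2), the key point is that iterating part (1) yields $\langle\mathcal{F}\rangle^i\subseteq\langle\mathcal{F}^i\rangle$ for every $i\ge 1$: by induction, $\langle\mathcal{F}\rangle^{i}=\langle\mathcal{F}\rangle\cdot\langle\mathcal{F}\rangle^{i-1}\subseteq\langle\mathcal{F}\rangle\cdot\langle\mathcal{F}^{i-1}\rangle\subseteq\langle\mathcal{F}\cdot\mathcal{F}^{i-1}\rangle=\langle\mathcal{F}^{i}\rangle$, where the last inclusion is part (1) applied with $\mathcal{F}'=\mathcal{F}^{i-1}$ (part (1) is stated for arbitrary subsets of $\mathbb{Z}_\ell^n$, so this is legitimate). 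Now fix $1\le i\le k$. Since $\mathcal{F}$ is $k$-closed, $\|u\|=0$ for every $u\in\mathcal{F}^{i}$; as $u\mapsto\|u\|$ is a linear functional, it vanishes on all of $\langle\mathcal{F}^{i}\rangle$, in particular on $\langle\mathcal{F}\rangle^{i}\subseteq\langle\mathcal{F}^{i}\rangle$. Hence $\langle\mathcal{F}\rangle$ is $k$-closed.

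For part (3), I would first record the set identity $(\mathcal{F}\cdot\mathcal{F})^{i}=\mathcal{F}^{2i}$ for every $i\ge 1$: expanding a product of $i$ terms each of the form $f_s\cdot g_s$ and using commutativity and associativity of $\cdot$ shows every element of $(\mathcal{F}\cdot\mathcal{F})^{i}$ is a product of $2i$ elements of $\mathcal{F}$, and conversely any product of $2i$ elements of $\mathcal{F}$ can be grouped into $i$ consecutive pairs, each an element of $\mathcal{F}\cdot\mathcal{F}$. Then for $1\le i\le\lfloor k/2\rfloor$ we have $2i\le k$, so $k$-closedness of $\mathcal{F}$ gives $\|w\|=0$ for every $w\in\mathcal{F}^{2i}=(\mathcal{F}\cdot\mathcal{F})^{i}$, which is precisely the statement that $\mathcal{F}\cdot\mathcal{F}$ is $\lfloor k/2\rfloor$-closed. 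None of the three parts presents a genuine obstacle; the only points requiring a moment's care are the inductive use of part (1) inside part (2) and the bookkeeping behind $(\mathcal{F}\cdot\mathcal{F})^{i}=\mathcal{F}^{2i}$ in part (3).
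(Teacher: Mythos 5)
Your proof is correct and follows essentially the same approach as the paper: part~(1) by bilinear expansion, part~(2) by iterating part~(1) to get $\langle\mathcal{F}\rangle^{i}\subset\langle\mathcal{F}^{i}\rangle$ and then invoking linearity of $\|\cdot\|$, and part~(3) by reducing to $k$-closedness of $\mathcal{F}$ via the identity $(\mathcal{F}\cdot\mathcal{F})^{i}=\mathcal{F}^{2i}$ (the paper dismisses this last item as trivial; you simply wrote out the grouping argument that justifies it).
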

	
	\begin{proof}
		1. Let $v\in\langle\mathcal{F}\rangle$ and $w\in\langle\mathcal{F}'\rangle$. Then there exists $a_x\in\mathbb{Z}_{\ell}$ for every $x\in\mathcal{F}$ such that $v=\sum_{x\in\mathcal{F}}a_x x$, and similarly, there exists $b_y\in\mathbb{Z}_{\ell}$ for every $y\in\mathcal{F}'$  such that $w=\sum_{y\in\mathcal{F}'}b_y y$. But then
		$$v\cdot w=\left(\sum_{x\in \mathcal{F}}a_x x\right)\cdot \left(\sum_{y\in\mathcal{F}'}b_y y\right)=\sum_{x\in\mathcal{F}}\sum_{y\in\mathcal{F}'}a_xb_y(x\cdot y)\in \langle \mathcal{F}\cdot\mathcal{F}'\rangle.$$
		
		2. Let $i\in [k]$. By repeatedly applying 1., we can write $\langle\mathcal{F}\rangle^{i}\subset \langle \mathcal{F}^{i}\rangle$.
		As $\mathcal{F}$ is $k$-closed, we have $||v||=0$ for every $v\in\mathcal{F}^{i}$. 
		As $|| \cdot ||$ is a linear function,
		we have $||w||=0$ for every $w\in \langle \mathcal{F}^{i}\rangle$. Hence, $||w||=0$ for every $1\leq i\leq k$ and $w\in \langle \mathcal{F}\rangle^{i}$, meaning that $\langle \mathcal{F}\rangle$ is $k$-closed.
		
		3. This follows trivially from the definition.
	\end{proof}
	
	\noindent
	Furthermore, let us record some simple properties of the twin relation. 
	\begin{claim}\label{claim:twins}
		Let $\mathcal{F}\subset \{0,1\}^{n}$.
		\begin{enumerate}
			\item If $\mathcal{F}$ is non-reducible, then the maximal sets of twins for $\mathcal{F}$ form a partition of $[n]$.
			
			\item For every $k\geq 1$, the family $\bigcup_{i=1}^{k} \mathcal{F}^{i}$ has the same pairs of twins as $\mathcal{F}$.
			
		\end{enumerate}
	\end{claim}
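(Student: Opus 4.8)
The two parts are essentially bookkeeping about the twin relation, so the plan is to unwind the definitions carefully rather than invoke any heavy machinery.

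For part 1, recall that in the previous section it was already observed that being twins is an equivalence relation on the set of coordinates $i\in[n]$ that are contained in at least one $F\in\mathcal F$. If $\mathcal F$ is non-reducible, then \emph{every} coordinate $i\in[n]$ lies in some $F\in\mathcal F$, so the twin relation is an equivalence relation on all of $[n]$. Its equivalence classes are exactly the maximal sets of twins (a singleton class $\{i\}$ is a set of twins by the $|T|=1$ convention), so they partition $[n]$. First I would state this one-line argument, being careful to point out that non-reducibility is precisely what guarantees every coordinate is covered.

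For part 2, write $\mathcal G=\bigcup_{i=1}^{k}\mathcal F^{i}$; note $\mathcal F=\mathcal F^{1}\subset\mathcal G$. One direction is immediate: if $j$ and $j'$ are twins for $\mathcal G$, then since $\mathcal F\subset\mathcal G$ they agree on every $v\in\mathcal F$, and some $v\in\mathcal F\subset\mathcal G$ has $v(j)\ne 0$, so they are twins for $\mathcal F$. Conversely, suppose $j,j'$ are twins for $\mathcal F$. Take any $w\in\mathcal G$, say $w=F_1\cdot F_2\cdots F_i$ with $F_1,\dots,F_i\in\mathcal F$ and $i\le k$; then $w(j)=\prod_{t=1}^{i}F_t(j)=\prod_{t=1}^{i}F_t(j')=w(j')$, using that each $F_t(j)=F_t(j')$ because $j,j'$ are twins for $\mathcal F$. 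Thus $j$ and $j'$ agree on every coordinate vector in $\mathcal G$. It remains to check that some $w\in\mathcal G$ has $w(j)\ne 0$: since $j,j'$ are twins for $\mathcal F$ there is $F\in\mathcal F$ with $j\in F$ (equivalently $F(j)=1\ne 0$), and $F\in\mathcal G$. Hence $j,j'$ are twins for $\mathcal G$, completing the proof.

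Neither step is really an obstacle; the only mild subtlety to flag is keeping straight the ``there exists $F$ with $i\in F$'' clause in the definition of twins through the passage to $\mathcal G$ — in both directions this is handled simply because $\mathcal F\subset\mathcal G$ and membership ``$j\in F$'' for $F\in\mathcal F$ already witnesses $w(j)\ne0$ for some $w\in\mathcal G$. I would also remark in passing that the same computation shows $\mathcal F^{i}$ (for any fixed $i$) has the same twin pairs as $\mathcal F$, from which part 2 follows by taking unions, but the direct argument above is cleaner.
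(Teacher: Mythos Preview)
Your proposal is correct and follows essentially the same route as the paper: part 1 is reduced to the fact that the twin relation is an equivalence relation on the covered coordinates (which is all of $[n]$ by non-reducibility), and part 2 is handled by the product computation $w(j)=\prod_t F_t(j)=\prod_t F_t(j')=w(j')$ together with the inclusion $\mathcal F\subset\mathcal G$. The only cosmetic point is that in your ``immediate'' direction you assert directly that some $v\in\mathcal F$ has $v(j)\ne 0$; strictly speaking the twin hypothesis on $\mathcal G$ only gives some $w\in\mathcal G$ with $w(j)\ne 0$, but writing $w$ as a product of elements of $\mathcal F$ immediately yields such a $v$, so this is harmless.
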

	\begin{proof}
		1. This is a consequence of the fact that the twin relation is an equivalence relation.
		
		2. Let $a,b\in [n]$, $a\neq b$. If $a$ and $b$ are twins in $\mathcal{F}$, then $v(a)=v(b)$ for every $v\in\mathcal{F}$, which implies that $v_1(a)\dots v_i(a)=v_1(b)\dots v_i(b)$ for every $i\in [k]$ and $v_1,\dots,v_i\in\mathcal{F}$. Hence, $a$ and $b$ are twins in $\mathcal{F}^{i}$ as well, and so in $\bigcup_{i=1}^{k}\mathcal{F}^i$. Also, if $a$ and $b$ are not twins, then there exists some $v\in\mathcal{F}\subset \bigcup_{i=1}^{k}\mathcal{F}^i$ such that $v(a)\neq v(b)$, so $a$ and $b$ are not twins in $\bigcup_{i=1}^{k}\mathcal{F}^i$.
	\end{proof}
	Recall that $S(n,\ell) \subset 2^{[n]}$ is the atomic set-family with $\lfloor n/\ell \rfloor$ atoms of size $\ell$ each. 
	The main result of this section is the following variant of Theorem \ref{thm:main0}. We show that if $\mathcal{F}\subset 2^{[n]}$ is such that the intersection of any $k$ not necessarily distinct elements of $\mathcal{F}$ has size divisible by $\ell$, then $|\mathcal{F}|\leq 2^{\lfloor n/\ell\rfloor}$, given $k$ is sufficiently large with respect to $\ell$. We also show that if $\mathcal{F}$ is close to being extremal, then $\mathcal{F}$ must be a subfamily of (an isomorphic copy of) $S(n,\ell)$. 
	\begin{theorem}\label{thm:main}
		Let $\ell$ be a positive integer, then there exists $k$ such that the following holds. Let $\mathcal{F}\subset \{0,1\}^{n}$ such that $\mathcal{F}$ is $k$-closed over $\mathbb{Z}_{\ell}$. Then $|\mathcal{F}|\leq 2^{\lfloor n/\ell\rfloor}$. Also, if $|\mathcal{F}|> 2^{\lfloor n/\ell\rfloor-1}$, then $[n]$ can be partitioned into sets $A_{1},\dots,A_{d},A'$ such that $A_{i}$ is a maximal set of twins for $\mathcal{F}$ for $i\in [d]$, $|A_{i}|=\ell$, $|A'|\leq \ell-1$, and $\mathcal{F}$ vanishes on $A'$. 
	\end{theorem}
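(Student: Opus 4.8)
The plan is to obtain the atomic partition (the second sentence of the theorem) under the assumption $|\mathcal F|>2^{\lfloor n/\ell\rfloor-1}$; the bound $|\mathcal F|\le 2^{\lfloor n/\ell\rfloor}$ is then immediate, being trivial when $|\mathcal F|\le 2^{\lfloor n/\ell\rfloor-1}$. We argue by induction on $n$, and first reduce to $\mathcal F$ being non-reducible: if $\mathcal F$ vanishes on a set $Z$, then $\mathcal F|_{[n]\setminus Z}$ is again $k$-closed over $\mathbb{Z}_\ell$ with $|\mathcal F|_{[n]\setminus Z}|=|\mathcal F|$; if $|Z|\ge\ell$ this already gives $|\mathcal F|\le 2^{\lfloor (n-\ell)/\ell\rfloor}=2^{\lfloor n/\ell\rfloor-1}$, so in the relevant range $|Z|\le\ell-1$, and we apply the inductive conclusion on $[n]\setminus Z$ and put $Z$ into the exceptional set $A'$. (When $\ell$ is not a prime power one reduces to the prime-power case via the Chinese Remainder Theorem, using that the twin classes of $\mathcal F$ must have size divisible by each prime-power divisor of $\ell$.)

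So assume $\mathcal F\subset\{0,1\}^n$ is non-reducible, $k$-closed over $\mathbb{Z}_\ell$, with $|\mathcal F|>2^{\lfloor n/\ell\rfloor-1}$, and fix a prime $p\mid\ell$. Since $v\cdot v=v$ for $0/1$ vectors, $\mathcal F\subseteq\mathcal F^2\subseteq\mathcal F^3\subseteq\cdots$, so $\langle\mathcal F\rangle\subseteq\langle\mathcal F^2\rangle\subseteq\cdots$ is an increasing chain of subspaces (over $\mathbb F_p$, say), and $\mathcal F^m$ is $\lfloor k/m\rfloor$-closed by Claim~\ref{claim:k-closed}(3). Choose $j$ with $2^{j+1}\le k$ minimizing the \emph{doubling} $h:=\dim\langle\mathcal F^{2^{j+1}}\rangle-\dim\langle\mathcal F^{2^j}\rangle$, and set $V:=\langle\mathcal F^{2^j}\rangle$; then $V\cdot V\subseteq\langle\mathcal F^{2^{j+1}}\rangle$ gives $\dim\langle V\cup V\cdot V\rangle\le\dim V+h$, and $V$, as well as $\langle V\cdot V\rangle$, is $k'$-closed for $k':=\lfloor k/2^{j+1}\rfloor$. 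Apply the structure theorem (Theorem~\ref{lemma:stability}, over $\mathbb F_p$ or its $\mathbb Z_\ell$ analogue) to $V$: we obtain a partition $[n]=A_1\sqcup\cdots\sqcup A_d\sqcup B$ in which $A_1,\dots,A_d$ are maximal twin classes of $\mathcal F$ (twins are unchanged by passing to spans, and by Claim~\ref{claim:twins}(2) by passing to coordinatewise powers) and $\dim\langle V|_B\rangle\le 2h$; moreover the proof of Theorem~\ref{lemma:stability} produces, for each $i$, a vector $v_i\in V$ whose restriction to $[n]\setminus B$ is a nonzero multiple of $\mathbf 1_{A_i}$.

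In the clean case $h=0$ we have $B=\emptyset$, so each $\mathbf 1_{A_i}\in V$; as $V$ is $k'$-closed this forces $|A_i|\equiv 0\pmod\ell$, hence $|A_i|\ge\ell$ and $d\le\lfloor n/\ell\rfloor$; since $\mathcal F$ is a subfamily of the atomic family with atoms $A_1,\dots,A_d$ we get $|\mathcal F|\le 2^d$, and $|\mathcal F|>2^{\lfloor n/\ell\rfloor-1}$ forces $d=\lfloor n/\ell\rfloor$, whence $\sum_i|A_i|=n$ together with $\ell\mid|A_i|$ and $|A_i|\ge\ell$ gives $|A_i|=\ell$ for every $i$ and $\ell\mid n$, exactly as required. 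For general $h$, the key additional input is that the restrictions $v_i|_B\in\mathbb F_p^B$ are pairwise orthogonal (since $v_i\cdot v_j$ vanishes on $[n]\setminus B$ for $i\ne j$ and $\langle V\cdot V\rangle$ is $k'$-closed), and $v_i|_B$ is non-isotropic exactly when $p\nmid|A_i|$ (indeed $\langle v_i|_B,v_i|_B\rangle\equiv-|A_i|$ up to a nonzero scalar); pairwise orthogonal non-isotropic vectors are linearly independent and lie in $\langle V|_B\rangle$, so at most $2h$ of the atoms have size not divisible by $p$. Combining this with $|\mathcal F|\le 2^d\cdot|\mathcal F|_B|$, the inductive bound $|\mathcal F|_B|\le 2^{\lfloor|B|/\ell\rfloor}$, and $n=\sum_i|A_i|+|B|$, one obtains $|\mathcal F|\le 2^{\lfloor n/\ell\rfloor}$, and in the range $|\mathcal F|>2^{\lfloor n/\ell\rfloor-1}$ these inequalities are forced to be tight, which makes $h=0$ (so we are back in the clean case) and makes the inductive structure on $B$ consist only of atoms of size $\ell$, completing the induction.

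The main obstacle is precisely the last step: bounding the doubling $h$ — equivalently, the length of the stabilization of the chain $\langle\mathcal F\rangle\subseteq\langle\mathcal F^2\rangle\subseteq\cdots$ — by a quantity depending only on $\ell$, and showing it must in fact vanish once $|\mathcal F|>2^{\lfloor n/\ell\rfloor-1}$. A naive pigeonhole over the $\log_2 k$ dimensions $\dim\langle\mathcal F^{2^j}\rangle$ only gives $h\lesssim n/\log_2 k$, and there genuinely exist $k$-closed families of Frankl--Odlyzko/Hadamard type with doubling of order $n\cdot p^{-k}$ and slow stabilization; the point is that all such families have $|\mathcal F|\approx 2^{n/2p}\le 2^{\lfloor n/\ell\rfloor-1}$, so the size hypothesis must be used to rule them out. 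This tension, together with ensuring that $\mathcal F|_B$ is itself $k$-closed (which requires the atom sizes to be divisible by $\ell$ and not merely by $p$ — handled via the ring form of the structure theorem and a Nakayama-type lift from $\mathbb F_p$ to $\mathbb Z_{p^a}$) and choosing the inductive statement so that the induction actually closes, is where essentially all the work lies.
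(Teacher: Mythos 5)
Your overall scheme (find a stage of the tower $\langle\mathcal F\rangle\subset\langle\mathcal F^2\rangle\subset\cdots$ with small doubling $h$, apply the structure theorem to split $[n]$ into twin classes plus a low-dimensional remainder $B$, use a bilinear-form/orthogonality argument to show most twin classes have $p$-divisible size, and close by induction) is in the same spirit as the paper, but your closing step contains a genuine gap that you flag yourself without resolving, and the paper closes the induction by a different device.

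First, your inductive estimate $|\mathcal F|_B|\le 2^{\lfloor|B|/\ell\rfloor}$ requires $\mathcal F|_B$ to be $k$-closed, which is false in general: projecting a $k$-closed family onto a subset of coordinates destroys $k$-closedness unless the discarded coordinates decompose into twin classes of size divisible by $\ell$ --- which is precisely what you are trying to prove. Your parenthetical "(which requires the atom sizes to be divisible by $\ell$ and not merely by $p$ --- handled via \dots)" is an admission that this step is circular; nothing in the argument handles it. Second, you assert that under the hypothesis $|\mathcal F|>2^{\lfloor n/\ell\rfloor-1}$ "these inequalities are forced to be tight, which makes $h=0$." This is not established. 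The pigeonhole only gives $h\lesssim n/\log_2 k$, and with $2h$ exceptional atoms of possibly tiny size, the arithmetic $|\mathcal F|\le 2^{d}\cdot|\mathcal F|_B|$ accumulates an error of order $n/\log k$ in the exponent, which never forces $h=0$; you have no mechanism that converts the size hypothesis into $h=0$.

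The paper avoids both problems by never inducting on $\mathcal F|_B$ and never needing $h$ to vanish. The crucial missing ingredient is Claim~\ref{claim:coordinates}. Lemma~\ref{lemma:smalldim} uses the same pigeonhole you do to find a stage with doubling $\le n/t$, and concludes that $B$ (the union of twin classes not $p^\alpha$-divisible) has $\dim(\langle\mathcal F|_B\rangle_p)\le 6n\alpha/t$. Claim~\ref{claim:coordinates} then converts this dimension bound into a bound on $|B|$ itself by \emph{contracting} $B$ to at most $\ell-1$ fresh coordinates (so the resulting family lives on $n-|B|+m$ coordinates, is still $k$-closed, and still has size at least $|\mathcal F|/2^{\dim}$), and applying the induction hypothesis to this contracted family on the full ground set --- not to $\mathcal F|_B$. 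Summing over the prime-power factors of $\ell$ and choosing $t$ appropriately gives $|B|<n$, so some twin class has size divisible by $\ell$; one then removes that twin class, which genuinely preserves $k$-closedness, and the induction closes. So the device you are missing is the contraction trick of Claim~\ref{claim:coordinates}, which sidesteps both the need for $\mathcal F|_B$ to be $k$-closed and the need for $h=0$.
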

	We start preparing the proof of Theorem \ref{thm:main} with a sequence of lemmas. 
	The Eventown theorem mentioned in the introduction can be easily extended to vector spaces, where we replace intersections with scalar products. We will make use of the following simple extension of this in which we consider certain bilinear forms instead of scalar product.
	
	\begin{lemma}\label{lemma:bilinear}
		Let $\mathbb{F}$ be a field, let $b_1,\dots,b_d\in \mathbb{F}$, let $z$ be the number of zeros among $b_1,\dots,b_d$, and let $b:\mathbb{F}^{d}\times\mathbb{F}^{d}\rightarrow \mathbb{F}$ be the bilinear form defined as $b(v,w)=\sum_{i=1}^{d}b_i v(i)w(i)$. Let $V<\mathbb{F}^{d}$ such that $b(v,w)=0$ for every $v,w\in V$. Then $\dim(V)\leq \frac{1}{2}(d+z)$.
	\end{lemma}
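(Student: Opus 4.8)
The plan is to reduce to the classical ``Eventown over a field'' statement by splitting the coordinates according to whether $b_i$ is zero or not. Write $[d] = Z \sqcup N$ where $Z = \{i : b_i = 0\}$ (so $|Z| = z$) and $N = \{i : b_i \neq 0\}$ (so $|N| = d - z$). The bilinear form $b$ only ``sees'' the coordinates in $N$: we have $b(v,w) = \sum_{i \in N} b_i v(i) w(i)$, and on $\mathbb{F}^{N}$ this is a \emph{nondegenerate} symmetric bilinear form, since each $b_i$ with $i \in N$ is a unit and the form is diagonal. The hypothesis says $V$ is totally isotropic for $b$, i.e. $b$ vanishes identically on $V \times V$.

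The key step is to bound $\dim(V|_N)$, the image of $V$ under the projection $\pi : \mathbb{F}^d \to \mathbb{F}^N$. Let $V' = \pi(V) \subset \mathbb{F}^N$. Since $b(v,w)$ depends only on $\pi(v),\pi(w)$, the subspace $V'$ is totally isotropic for the nondegenerate form $b$ on $\mathbb{F}^N$. A standard fact from the theory of bilinear forms is that a totally isotropic subspace with respect to a nondegenerate symmetric bilinear form on an $m$-dimensional space has dimension at most $m/2$: indeed, if $U$ is totally isotropic then $U \subseteq U^{\perp}$, and nondegeneracy gives $\dim U + \dim U^{\perp} = m$, hence $2\dim U \le m$. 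Applying this with $m = |N| = d - z$ yields $\dim(V') \le (d-z)/2$. (If one wants to avoid invoking this fact as a black box, one can diagonalize further or mimic the determinant argument used in the classical Eventown proof; either way the bound $\dim(V') \le (d-z)/2$ drops out.)

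Finally I would lift back from $V'$ to $V$. The projection $\pi$ restricted to $V$ has kernel $V \cap \mathbb{F}^Z$ (the elements of $V$ supported on $Z$), so
$$\dim(V) = \dim(V') + \dim(V \cap \mathbb{F}^Z) \le \frac{d-z}{2} + z = \frac{d+z}{2},$$
since $\dim(V \cap \mathbb{F}^Z) \le \dim(\mathbb{F}^Z) = z$. This is exactly the claimed bound $\dim(V) \le \tfrac12(d+z)$.

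I do not anticipate a serious obstacle here; the only point requiring a little care is the statement that a maximal totally isotropic subspace for a nondegenerate symmetric bilinear form over an arbitrary field $\mathbb{F}$ has dimension at most half the ambient dimension. This holds in all characteristics for symmetric bilinear forms via the $U \subseteq U^\perp$ argument above (the subtleties of quadratic forms in characteristic $2$ do not arise, since we are given the bilinear form directly). So the main ``work'' is just bookkeeping the split between the $Z$-coordinates and the $N$-coordinates.
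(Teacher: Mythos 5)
Your proof is correct. It takes a slightly different route from the paper, though both rest on the same orthogonality principle. The paper works globally: it sets $M = \mathrm{diag}(b_1,\dots,b_d)$, notes that $V$ and $W = MV$ are orthogonal with respect to the standard inner product (so $\dim V + \dim W \le d$), and combines this with $\dim W \ge \dim V - z$ from the rank--nullity theorem applied to $M$, and it is done in two lines without ever splitting coordinates. You instead partition $[d]$ into the zero set $Z$ and nonzero set $N$, project to $\mathbb{F}^N$ where the form is nondegenerate, invoke the standard bound $\dim U \le m/2$ for a totally isotropic subspace $U$ of a nondegenerate symmetric bilinear form on an $m$-dimensional space, and then add back $\le z$ dimensions lost to the kernel of the projection. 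The content is the same (your ``$U \subseteq U^\perp$ plus $\dim U + \dim U^\perp = m$'' argument is exactly the paper's orthogonality step in disguise, and your ``add back $z$'' is its rank--nullity step); the paper's formulation avoids the coordinate split and is a bit more compact, while yours makes the reduction to the classical totally-isotropic bound explicit, which some readers may find more transparent. Both are valid over an arbitrary field, and you correctly flag that the characteristic-$2$ subtleties for quadratic forms do not enter since the hypothesis is on the bilinear form itself.
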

	
	\begin{proof}
		Let $M$ be the $d\times d$ diagonal matrix with diagonal entries $b_{1},\dots,b_{d}$, and let $W=\{Mv:v\in V\}<\mathbb{F}^{d}$. Then $\dim(\mbox{ker }M)=z$, so $\dim(W)\geq\dim(V)-z$. 
		By definition, $V$ and $W$ are orthogonal spaces (with respect to the standard inner product). Therefore, $\dim(V)+\dim(W)\leq d$, which implies $\dim(V)\leq \frac{1}{2}(d+z)$.
	\end{proof}
	
	In what comes, we show that if $\ell=p^{\alpha}$ is a prime power, and  $\mathcal{F}\subset \{0,1\}^n$ is $k$-closed over $\mathbb{Z}_{\ell}$ for some large constant $k$, then most sets of maximal twins for $\mathcal{F}$ must have size divisible by $\ell$, provided that the dimension of $\langle \mathcal{F}\rangle_p$ is large. We start with the case when $\ell$ is a prime.
	
	\begin{lemma}\label{lemma:prime}
		Let $V<\mathbb{F}_p^{n}$, let $A_1,\dots,A_d$ be a partition of $[n]$ into twins for $V$, and suppose that $V$ is 2-closed. If $\dim(V) \geq d-h$, then at least $d-2h$ of the numbers $|A_1|,\dots,|A_{d}|$ are divisible by $p$.
	\end{lemma}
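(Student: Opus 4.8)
The plan is to deduce this from Lemma \ref{lemma:bilinear} by collapsing each block of twins to a single coordinate. Because every $A_i$ is a set of twins for $V$, all coordinates inside $A_i$ take the same value on each $v \in V$; so I would fix a representative $a_i \in A_i$ and let $\pi\colon \mathbb{F}_p^n \to \mathbb{F}_p^d$ be the projection onto $\{a_1,\dots,a_d\}$. The first step is to check that $\pi$ restricted to $V$ is injective — a vector of $V$ that vanishes on every $a_i$ must vanish on every $A_i$, hence be zero — so that $\bar V := \pi(V)$ satisfies $\dim \bar V = \dim V \ge d-h$. For $v \in V$, the value $\pi(v)(i)$ is exactly the common value of $v$ on $A_i$.

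Next I would translate the hypothesis that $V$ is $2$-closed. This gives $\sum_{j=1}^n v(j)w(j) = ||v \cdot w|| = 0$ for all $v,w \in V$; grouping the sum by the blocks $A_i$ and using that $v$ and $w$ are constant on each block yields $\sum_{i=1}^d |A_i|\,\pi(v)(i)\,\pi(w)(i) = 0$. In other words, $\bar V$ is totally isotropic for the diagonal bilinear form $b$ on $\mathbb{F}_p^d$ whose $i$-th coefficient is $|A_i| \bmod p$, and this coefficient is zero in $\mathbb{F}_p$ precisely when $p \mid |A_i|$.

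The last step is to apply Lemma \ref{lemma:bilinear} to $\bar V$ and $b$: letting $z$ denote the number of indices $i \in [d]$ with $p \mid |A_i|$, the lemma gives $d - h \le \dim \bar V \le \frac{1}{2}(d+z)$, so $z \ge d - 2h$, which is exactly the assertion.

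I do not expect a genuine obstacle here; the content is entirely in setting up the right projection and reading off the bilinear form. The two points I would be most careful about are that $\pi|_V$ is injective (so that no dimension is lost in passing to $\bar V$) and that the zero coefficients of the diagonal form correspond exactly — with no off-by-one slack — to the atoms whose size is divisible by $p$, so that Lemma \ref{lemma:bilinear} delivers the bound with the stated constant $d-2h$.
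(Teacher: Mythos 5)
Your argument is correct and matches the paper's proof: the paper defines the same collapsing map $\phi(v)(i) = s$ when $v|_{A_i}$ is the constant $s$ vector, checks injectivity, identifies $\|u\cdot v\| = \sum_i |A_i|\,\phi(u)(i)\,\phi(v)(i)$ as the diagonal bilinear form of Lemma \ref{lemma:bilinear}, and reads off $z \geq d - 2h$. There is no meaningful difference between the two proofs.
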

	
	\begin{proof}
		For $i\in [d]$, let $b_i=|A_i|$ and let $b$ be the bilinear form defined as in Lemma \ref{lemma:bilinear}. Let $\phi:V\rightarrow\mathbb{F}_p^{d}$ be the linear map defined as $\phi(v)(i)=s$ if $v|_{A_i}$ is the constant $s$ vector ($i = 1,\dots,d$). Then $\phi$ is an injection, so $\dim(\phi(V))=\dim(V)=d-h$. Also, by the definition of $b$, for every $u,v\in V$ we have $||u\cdot v||= \sum_{i = 1}^n{u(i) \cdot v(i)} = \sum_{i=1}^d{|A_i| \cdot \phi(u)(i) \cdot \phi(v)(i)} =  b(\phi(u),\phi(v))$, so we have $b(x,y)=0$ for every $x,y\in\phi(V)$. But then by Lemma \ref{lemma:bilinear}, if $z$ is the number of zeros among $b_1,\dots,b_d$, then $\dim(\phi(V))\leq \frac{1}{2}(d+z)$, which gives $z\geq d-2h$.
	\end{proof}
	
	\begin{lemma}\label{lemma:primepower}
		Let $p$ be a prime and $\alpha\in\mathbb{Z}^{+}$. Let $\mathcal{F}\subset \{0,1\}^{n}$ be $2(p+\alpha)$-closed over $\mathbb{Z}_{p^{\alpha}}$, let $\dim(\langle \mathcal{F}\rangle_p)=d$, and let $A_1,\dots,A_d,B$ be a partition of $[n]$ such that $A_{i}$ is a set of twins for $\mathcal{F}$, and $\dim(\langle \mathcal{F}|_{B}\rangle_{p})\leq h$.  Then at least $d-2\alpha h$ of the numbers $|A_1|,\dots,|A_d|$ are divisible by $p^{\alpha}$.
	\end{lemma}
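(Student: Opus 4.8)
The plan is to induct on $\alpha$. The base case $\alpha=1$ is essentially Lemma~\ref{lemma:prime}, with a small device to dispose of the coordinates of $B$: set $V=\langle\mathcal F\rangle_p$, which is $2$-closed by Claim~\ref{claim:k-closed}(2), and let $V_0=\{v\in V: v|_B=0\}$. Since $\dim(V|_B)=\dim\langle\mathcal F|_B\rangle_p\le h$ we have $\dim V_0\ge d-h$. The map $\phi\colon V_0\to\mathbb F_p^{d}$ sending $v$ to the tuple of its (constant) values $\phi(v)(i)=\epsilon_v(i)$ on $A_1,\dots,A_d$ is injective, because a vector of $V_0$ vanishing on every $A_i$ vanishes identically; hence $\dim\phi(V_0)\ge d-h$. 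For $u,v\in V_0$ the $B$-part of $u\cdot v$ vanishes, so $0=\|u\cdot v\|=\sum_{i=1}^d |A_i|\,\phi(u)(i)\phi(v)(i)$, i.e.\ $\phi(V_0)$ is isotropic for the diagonal bilinear form with entries $|A_1|,\dots,|A_d|\pmod p$. Lemma~\ref{lemma:bilinear} then gives $d-h\le\dim\phi(V_0)\le\tfrac12(d+z)$, where $z=\#\{i:p\mid|A_i|\}$, so $z\ge d-2h$.

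For the inductive step, assume $\alpha\ge 2$. Since $2(p+\alpha)\ge 2(p+\alpha-1)$ and $p^\alpha\mid m$ implies $p^{\alpha-1}\mid m$, the family $\mathcal F$ is $2(p+\alpha-1)$-closed over $\mathbb Z_{p^{\alpha-1}}$, so by the induction hypothesis there is $J\subseteq[d]$ with $|J|\ge d-2(\alpha-1)h$ and $|A_i|=p^{\alpha-1}n_i$ for all $i\in J$. Put $Z=B\cup\bigcup_{i\notin J}A_i$, so $\dim\langle\mathcal F|_Z\rangle_p\le (d-|J|)+h$. It suffices to show that at least $|J|-2h$ of the numbers $n_i$ $(i\in J)$ are divisible by $p$, since then at least $|J|-2h\ge d-2\alpha h$ of the $|A_i|$ are divisible by $p^\alpha$. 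For this, pass to $M=\langle\mathcal F\rangle_{p^\alpha}$, which by Claim~\ref{claim:k-closed}(2) satisfies $\langle u,v\rangle=\|u\cdot v\|=0$ in $\mathbb Z_{p^\alpha}$ for all $u,v\in M$. Consider $M_J=\{v\in M: v|_Z=0\}$, whose vectors are supported on $\bigcup_{i\in J}A_i$; the key point is that its reduction modulo $p$ has dimension at least $|J|-h$, which one wants to deduce from $\dim\langle\mathcal F|_Z\rangle_p\le (d-|J|)+h$ together with enough closedness to control the part of $M$ living on $Z$. Granting this, for $u,v\in M_J$ the identity $0=\langle u,v\rangle$ becomes $p^{\alpha-1}\sum_{i\in J}n_i\,\epsilon_u(i)\epsilon_v(i)=0$ in $\mathbb Z_{p^\alpha}$, hence $\sum_{i\in J}n_i\,\epsilon_u(i)\epsilon_v(i)\equiv 0\pmod p$; thus the mod-$p$ reduction of $M_J$, projected to the coordinates of $J$, is isotropic for the diagonal form with entries $n_i\pmod p$, and another application of Lemma~\ref{lemma:bilinear} (exactly as in the base case) yields that at least $|J|-2h$ of the $n_i$ vanish mod $p$, completing the induction.

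The hard part is the step producing $M_J$ and certifying that its reduction mod $p$ is large enough: because $\mathbb Z_{p^\alpha}$ is not a field, reductions and coordinate-restrictions of the module $\langle\mathcal F\rangle_{p^\alpha}$ can behave badly — for instance the minimal number of generators of $\langle\mathcal F|_Z\rangle_{p^\alpha}$ need not be bounded by $\dim\langle\mathcal F|_Z\rangle_p$ — so one must keep careful track of the relevant $p$-torsion. This is presumably where the full strength of the hypothesis is used: rather than arguing with $\mathcal F$ directly one works with the longer products $\mathcal F\cdot\mathcal F,\ \mathcal F\cdot\mathcal F\cdot\mathcal F,\dots$, which remain closed with a proportionally smaller parameter (Claim~\ref{claim:k-closed}) and have the same twins (Claim~\ref{claim:twins}), allowing $\mathcal F$ to be replaced by an essentially atomic family on which the module-theoretic bookkeeping is manageable; the resulting loss of a factor of roughly $p$ in the closedness parameter is what accounts for the ``$p$'' in $k=2(p+\alpha)$.
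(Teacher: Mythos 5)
Your base case is correct and matches the paper, and the inductive skeleton (pass to coordinates $J$ where $p^{\alpha-1}\mid|A_i|$, set $Z=B\cup\bigcup_{i\notin J}A_i$, try to find many vectors supported on $\bigcup_{i\in J}A_i$ with the given orthogonality, then apply Lemma~\ref{lemma:bilinear}) is also the right shape. But you have correctly flagged, and not resolved, the genuine gap: you need a large family of vectors in $\langle\mathcal F\rangle_{p^\alpha}$ whose coordinates on $Z$ vanish \emph{mod $p^\alpha$}, not merely mod $p$. Taking $W=\{v\in\langle\mathcal F\rangle_p : v|_{Z}=0\}$ gives a large space, and each $w\in W$ lifts to some $w'\in\langle\mathcal F\rangle_{p^\alpha}$ with $w'\equiv w\pmod p$, but then $w'|_Z\equiv 0$ only mod $p$, so $w'\notin M_J$; there is no reason the submodule $M_J=\{v\in\langle\mathcal F\rangle_{p^\alpha}:v|_Z=0\}$ is large, and your proof does not supply one. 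Your final speculative paragraph (replacing $\mathcal F$ by iterated products $\mathcal F\cdot\mathcal F\cdot\dots$ to make it ``essentially atomic'') is the mechanism used in Lemma~\ref{lemma:smalldim}, not here, and does not fill this hole; in particular it does not explain why the closedness parameter is $2(p+\alpha)$.

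The paper's resolution is a different and quite specific device: choose the smallest $\beta$ with $\beta\ge\alpha$ and $\beta\equiv1\pmod{p-1}$ (so $\beta<\alpha+p$), and replace $w'$ by its coordinatewise power $(w')^{\beta}$. Fermat's little theorem gives $(w')^{\beta}\equiv w\pmod p$, so nothing is lost mod $p$; and for $i\in Z$ we have $p\mid w'(i)$, hence $p^{\beta}\mid (w')^{\beta}(i)$ and in particular $(w')^{\beta}(i)\equiv0\pmod{p^\alpha}$ since $\beta\ge\alpha$. Because $\mathcal F$ is $2\beta$-closed over $\mathbb Z_{p^\alpha}$ and $(u')^{\beta}\cdot(v')^{\beta}\in\langle\mathcal F^{2\beta}\rangle_{p^\alpha}$, one gets $\|(u')^{\beta}\cdot(v')^{\beta}\|\equiv0\pmod{p^\alpha}$, and since the $Z$-part contributes $0$ mod $p^\alpha$ this reduces to exactly the mod-$p$ isotropy of the diagonal form with entries $n_i=|A_i|/p^{\alpha-1}$ on a space of dimension at least $|J|-h$ (the injective ``value-on-each-$A_i$'' map applied to $W$), after which Lemma~\ref{lemma:bilinear} finishes as you intended. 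The ``$+p$'' in the closedness parameter is precisely the room needed to reach such a $\beta$, not a loss from doubling; you should insert this $\beta$-th power step in place of the $M_J$ paragraph.
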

	
	\begin{proof}
		We prove this by induction on $\alpha$. The case $\alpha=1$ follows from by applying Lemma \ref{lemma:prime} to 
		$V = \langle \mathcal{F}|_{[n] \setminus B}\rangle_{p}$, 
		noting that 
		$\dim(V)\geq \dim(\langle \mathcal{F}\rangle_p) - \dim(\langle \mathcal{F}|_{B}\rangle_{p}) \geq d-h$.
		
		Suppose now that $\alpha>1$. Then, by our induction hypothesis, at least $k=d-(2\alpha-2)h$ of the sets $A_1,\dots,A_d$ have size divisible by $p^{\alpha-1}$, without loss of generality, let these sets be $A_1,\dots,A_k$. Also, let $B'=A_{k+1}\cup\dots\cup A_{d}\cup B$. Letting $V=\langle \mathcal{F}\rangle_p$, note that $$\dim(V|_{B'})\leq \dim(V|_{B})+(d-k)\leq h+d-k.$$ 
		Let $W < V$ be the subspace of all $v \in V$ which vanish on $B'$. Then $\dim(W)\geq d-\dim(V|_{B'})\geq k-h$.
		
		Note that for every $w\in W$ there exists some $w'\in \langle\mathcal{F}\rangle_{p^{\alpha}}$ such that $w'\equiv w \pmod p$. (Indeed, $w \in \langle\mathcal{F}\rangle_{p}$, so $w$ is a linear combination of elements of $\mathcal{F}$ with coefficients in $\mathbb{F}_p$. Taking the same linear combination but over $\mathbb{Z}_{p^{\alpha}}$ gives $w'$.) Let $\beta$ be the smallest number such that $\beta \geq \alpha$ and $\beta=1 \pmod{p-1}$, then $\beta<\alpha+p$. As $\mathcal{F}$ is $2\beta$-closed over $\mathbb{Z}_{p^{\alpha}}$, for every $u,v\in W$ we have that $||(u')^{\beta}\cdot (v')^{\beta}||$ is divisible by $p^{\alpha}$. Note also that we have the following two properties:
		\begin{enumerate}
		    \item[(a)]$(w')^{\beta}\equiv w \pmod p$, as $\beta \equiv 1 \pmod{p-1}$;
		    \item[(b)]If $w(i)=0$ (over $\mathbb{F}_p$) for some $i\in [n]$, then $p^{\alpha}\mid (w')^{\beta}(i)$, because $\beta \geq \alpha$. This means that $(w')^{\beta}(i) \equiv 0 \pmod{p^{\alpha}}$ for every $i \in B'$, since $w$ vanishes on $B'$.
		\end{enumerate}
		For $i\in [k]$, let $A_{i}'$ be a set of size $|A_{i}|/p^{\alpha-1}$, and let $A'=\bigcup_{i=1}^{k}A_{i}'$. Define the linear map $\phi:W\rightarrow\mathbb{F}_p^{A'}$ as follows. If $w\in W$, $i\in [k]$, and $w|_{A_i}$ is the constant $s$ vector, then $\phi(w)|_{A_i'}$ is the constant $s$ vector. 
		(Every $w \in W$ is constant on each $A_i$ because $A_i$ is a set of twins for $V$, and $w$ is determined by its values on $A_1,\dots,A_k$ as $w$ vanishes on $B' = [n] \setminus (A_1 \cup \dots \cup A_k)$.)
		Then $\phi$ is an injection, so $\dim(W)=\dim(\phi(W)) \geq k - h$. Also, for every $u,v\in W$, we have 
		$$||(u')^{\beta}\cdot (v')^{\beta}||\equiv p^{\alpha-1}||\phi(u)\cdot \phi(v)|| \pmod {p^{\alpha}}.$$
		Here we use properties (a)-(b).
		So we see that $||x\cdot y||=0$ (over $\mathbb{F}_p$) for every $x,y\in\phi(W)$, so $\phi(W)$ is $2$-closed. Let $z$ be the number of sets among $A_1',\dots,A_k'$, whose size is divisible by $p$. We can apply Lemma \ref{lemma:prime} to the space $\phi(W)$ and the sets $A'_1,\dots,A'_k$ (recalling that $\dim(\phi(W)) \geq k - h$) to conclude that 
		$z\geq k-2h\geq d-2\alpha h.$
		As $z$ is also the number of sets among $A_1,\dots,A_k$ whose size is divisible by $p^{\alpha}$, this finishes the proof.
	\end{proof}
	
	\begin{lemma}\label{lemma:smalldim}
		Let $p$ be a prime, and $\alpha,t\in\mathbb{Z}^{+}$. Let $\mathcal{F}\subset \{0,1\}^{n}$ such that $\mathcal{F}$ is non-reducible and ${2^{t+1}(p+\alpha)}$-closed over $\mathbb{Z}_{p^{\alpha}}$. Let $A_1,\dots,A_d$ be the unique partition of $[n]$ into maximal sets of twins, and \nolinebreak let $$B=\bigcup_{\substack{i\in [d]\\|A_i|\not\equiv 0\ \mathrm{(mod }\ p^{\alpha})}}A_{i}.$$ Then $\dim(\langle \mathcal{F}|_{B}\rangle_{p})\leq \frac{6n\alpha}{t}$.
	\end{lemma}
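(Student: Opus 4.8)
\emph{Plan.} The idea is to apply the structure theorem (Theorem~\ref{lemma:stability}) not to $\mathcal{F}$ itself but to a well-chosen ``power'' of $\mathcal{F}$ selected by a pigeonhole/telescoping argument. For $j=0,1,\dots,t$ set $\mathcal{G}_j=\bigcup_{i=1}^{2^j}\mathcal{F}^i$ and $V_j=\langle\mathcal{G}_j\rangle_p$, so that $V_0\subseteq V_1\subseteq\dots\subseteq V_t\subseteq\mathbb{F}_p^n$. Since $\dim V_t\le n$, the increments $h_j:=\dim V_{j+1}-\dim V_j$ over $0\le j\le t-1$ sum to at most $n$, so there is $j^*\in\{0,\dots,t-1\}$ with $h_{j^*}\le n/t$. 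The rest of the argument studies $\mathcal{G}_{j^*}$, and the point is that it has ``small doubling'' in the sense of Theorem~\ref{lemma:stability}.

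First I would record the relevant properties of $\mathcal{G}_{j^*}$. It lies in $\{0,1\}^n$ (coordinate-wise products of $0/1$ vectors are $0/1$ vectors); it is non-reducible and, by Claim~\ref{claim:twins}(2) applied with $k=2^{j^*}$, has exactly the same pairs of twins — hence the same maximal sets of twins — as $\mathcal{F}$; and it is $2(p+\alpha)$-closed over $\mathbb{Z}_{p^\alpha}$, because $\mathcal{G}_{j^*}^m\subseteq\bigcup_{1\le s\le m2^{j^*}}\mathcal{F}^s$ so this follows from $\mathcal{F}$ being $2^{j^*+1}(p+\alpha)$-closed, which is implied by the hypothesis that $\mathcal{F}$ is $2^{t+1}(p+\alpha)$-closed. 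Next, $\mathcal{G}_{j^*}\cdot\mathcal{G}_{j^*}=\bigcup_{m=2}^{2^{j^*+1}}\mathcal{F}^m$, so $\mathcal{G}_{j^*}\cup(\mathcal{G}_{j^*}\cdot\mathcal{G}_{j^*})=\mathcal{G}_{j^*+1}$; combining this with Claim~\ref{claim:k-closed}(1) gives $\langle V_{j^*}\cup(V_{j^*}\cdot V_{j^*})\rangle=V_{j^*+1}$, so the ``$h$'' of Theorem~\ref{lemma:stability} for $V_{j^*}$ is precisely $h_{j^*}$.

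Now apply Theorem~\ref{lemma:stability} to $V=V_{j^*}$: with $d_{j^*}:=\dim V_{j^*}$ we obtain a partition $[n]=A_1\cup\dots\cup A_{d_{j^*}}\cup B_{j^*}$ in which each $A_i$ is a maximal set of twins for $V_{j^*}$ — equivalently for $\mathcal{G}_{j^*}$, equivalently for $\mathcal{F}$ — and $\dim(V_{j^*}|_{B_{j^*}})\le 2h_{j^*}$. Feeding exactly this data into Lemma~\ref{lemma:primepower} (applied to the $2(p+\alpha)$-closed family $\mathcal{G}_{j^*}$, with $h=2h_{j^*}$) shows that all but at most $2\alpha\cdot 2h_{j^*}=4\alpha h_{j^*}$ of the sets $A_1,\dots,A_{d_{j^*}}$ have size divisible by $p^\alpha$. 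Since the maximal twin classes of $\mathcal{F}$ are precisely $A_1,\dots,A_{d_{j^*}}$ together with those maximal twin classes that lie inside $B_{j^*}$, the set $B$ from the statement satisfies $B\subseteq B_{j^*}\cup\bigcup\{A_i:\ p^\alpha\nmid|A_i|\}$; write $B'$ for the latter union, which is a union of at most $4\alpha h_{j^*}$ twin classes of $\mathcal{F}$.

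Finally I would bound the dimension. Using that $\dim\langle\mathcal{F}|_{\,\cdot\,}\rangle_p$ is subadditive over disjoint unions (rank--nullity for the two coordinate projections), $\dim\langle\mathcal{F}|_B\rangle_p\le\dim\langle\mathcal{F}|_{B_{j^*}}\rangle_p+\dim\langle\mathcal{F}|_{B'}\rangle_p$. For the first term, $\mathcal{F}\subseteq\mathcal{G}_{j^*}$ gives $\dim\langle\mathcal{F}|_{B_{j^*}}\rangle_p\le\dim\langle\mathcal{G}_{j^*}|_{B_{j^*}}\rangle_p=\dim(V_{j^*}|_{B_{j^*}})\le 2h_{j^*}$. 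For the second, every $v\in\mathcal{F}$ is constant on each of the at most $4\alpha h_{j^*}$ twin classes making up $B'$, so $\mathcal{F}|_{B'}$ lies in a space of dimension at most $4\alpha h_{j^*}$. Hence $\dim\langle\mathcal{F}|_B\rangle_p\le(4\alpha+2)h_{j^*}\le 6\alpha h_{j^*}\le 6n\alpha/t$, using $\alpha\ge 1$ and $h_{j^*}\le n/t$. The main thing to be careful about is the bookkeeping: keeping the twin relations of $\mathcal{F}$, $\mathcal{G}_{j^*}$ and $V_{j^*}$ identified throughout, verifying that $\mathcal{G}_{j^*}$ inherits enough closedness, and correctly matching the ``doubling'' of $V_{j^*}$ with the pigeonholed increment $h_{j^*}$ — none of these is deep, but they are where an error would creep in.
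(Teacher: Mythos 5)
Your proof is correct and essentially identical to the paper's: the paper also iterates the coordinate-wise product (writing $\mathcal{F}_r = \mathcal{F}^{2^r}$, which for $0/1$ vectors equals your $\mathcal{G}_r$), pigeonholes to find a step of small dimension increase, applies Theorem~\ref{lemma:stability}/\ref{cor:stability} and then Lemma~\ref{lemma:primepower}, and bounds $\dim\langle\mathcal{F}|_B\rangle_p$ by the same decomposition into the residual block and the misaligned twin classes. The only differences are notational.
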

	
	\begin{proof}
		Let $\mathcal{F}_0=\mathcal{F}$, and for $i=1,2,\dots,t$, let $\mathcal{F}_{i}=\mathcal{F}_{i-1}\cdot\mathcal{F}_{i-1}$. Note that $\mathcal{F}_{i-1} \subset \mathcal{F}_i$, as it is clear that $\mathcal{F}_{i-1}$ contains only 0-1 vectors. Moreover, $\mathcal{F}_i$ is $2^{t+1-i}(p+\alpha)$-closed over $\mathbb{Z}_{p^{\alpha}}$, which follows by induction and Item 3 in Claim \ref{claim:k-closed}. From this, we will only use that $\mathcal{F}_i$ is at least $2(p+\alpha)$-closed. Finally, $A_1,\dots,A_d$ is also the unique partition of $[n]$ into maximal sets of twins for $\mathcal{F}_i$, see Item 2 in Claim \ref{claim:twins}.
		
		The sequence of positive integers $(\dim(\langle \mathcal{F}_r\rangle_p))_{r=0,\dots,t}$ is monotone increasing (using that $\mathcal{F}_{0}\subset\dots\subset \mathcal{F}_{t})$, and its elements are contained in $[n]$, so by the pigeonhole principle, there exists $0\leq r<t$ such that 
		$$\dim(\langle\mathcal{F}_{r+1} \rangle_{p})\leq \dim(\langle\mathcal{F}_r\rangle_{p})+\frac{n}{t}.$$
		Let $d'=\dim(\langle\mathcal{F}_r\rangle_{p})$. Applying Theorem \ref{cor:stability}, we deduce that $[n]$ can be partitioned into
        $d'+1$ sets, $d'$ of which are maximal sets of twins for $\mathcal{F}_r$, and an additional set $C$ satisfying $\dim(\langle \mathcal{F}_r|_{C}\rangle_p)\leq \frac{2n}{t}$. Each maximal set of twins equals $A_i$ for some $i$, so let us assume without loss of generality that the $d'$ maximal sets of twins are $A_1,\dots,A_{d'}$.
		Now, as $\mathcal{F}_{r}$ is $2(p+\alpha)$-closed, we can apply Lemma \ref{lemma:primepower} with $h =  \frac{2n}{t}$ to conclude that at least $q=d'-\frac{4n\alpha}{t}$ of the numbers $|A_{1}|,\dots,|A_{d'}|$ are divisible by $p^{\alpha}$. Without loss of generality, let $A_1,\dots,A_q$ be the sets of twins whose sizes are divisible by $p^{\alpha}$. Let $D=C\cup A_{q+1}\cup\dots\cup A_{d'}$. Then $B\subset D$, and noting that $\dim(\langle\mathcal{F}_r|_{D\setminus C}\rangle_p)\leq d'-q$, and  $\mathcal{F}\subset \mathcal{F}_r$, we get the chain of inequalities $$\dim(\langle \mathcal{F}|_{B}\rangle_p)\leq \dim(\langle \mathcal{F}_{r}|_{D}\rangle_p)\leq (d'-q)+\dim(\langle \mathcal{F}_r|_{C}\rangle_p)\leq \frac{4n\alpha+2n}{t} \leq \frac{6n\alpha}{t}.$$ 
		This finishes the proof.
	\end{proof}
	
	The final ingredient we need for the proof of Theorem \ref{thm:main} is the following well known result, see e.g. the work of Odlyzko \cite{O81}.
	
	\begin{lemma}\label{lemma:odim}
		Let $p$ be a prime and $V< \mathbb{F}_p^{n}$. Then $|V\cap\{0,1\}^{n}|\leq 2^{\dim(V)}$.
	\end{lemma}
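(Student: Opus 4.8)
The plan is to reduce the count to a projection onto a well-chosen set of coordinates of size exactly $\dim V$. Write $d = \dim(V)$, pick a basis $v_1, \dots, v_d$ of $V$, and let $M$ be the $d \times n$ matrix whose rows are $v_1, \dots, v_d$. Since $M$ has rank $d$, some $d$ of its columns are linearly independent; let $S \subseteq [n]$ be the index set of such a collection of columns, so that the $d \times d$ submatrix $M|_S$ is invertible over $\mathbb{F}_p$. Then the rows of $M|_S$ span all of $\mathbb{F}_p^S$, which means $V|_S = \mathbb{F}_p^S$ and in particular $\dim(V|_S) = d$.

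Next I would consider the restriction (projection) map $\pi : V \to V|_S$ given by $v \mapsto v|_S$. It is linear and surjective onto $V|_S$, a space of dimension $d$, while its domain $V$ also has dimension $d$; hence the rank–nullity theorem forces $\ker(\pi) = \{\mathbf 0\}$, so $\pi$ is a bijection. In particular $\pi$ is injective when restricted to the subset $V \cap \{0,1\}^n$.

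Finally, since $\pi$ maps zero-one vectors of $V$ into $\{0,1\}^S$, we get
$$|V \cap \{0,1\}^n| \;=\; |\pi(V \cap \{0,1\}^n)| \;\le\; |\{0,1\}^S| \;=\; 2^{|S|} \;=\; 2^d,$$
which is exactly the claimed bound.

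There is no genuinely hard step here; the only points requiring care are the two standard facts invoked above, namely that the rank of $M$ equals the maximal number of linearly independent columns (so the coordinate set $S$ exists) and that a surjective linear map between spaces of equal finite dimension is injective (so the projection is a bijection on all of $V$). It is worth noting why the seemingly simpler induction on $\dim V$ — splitting the zero-one vectors of $V$ according to their value on a fixed coordinate $i$ and translating the ``value $1$'' part into the subspace $\{v \in V : v(i)=0\}$ — does not work for $p > 2$: the translate of a zero-one vector need not be zero-one, so one cannot bound that part by $2^{d-1}$. This is precisely the reason the projection argument is the right tool.
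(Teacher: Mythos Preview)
Your argument is correct and is the standard proof of this well-known fact. The paper itself does not supply a proof at all: it states the lemma as a known result and simply refers the reader to Odlyzko \cite{O81}. So there is nothing to compare against; your projection argument is exactly the one usually given.
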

	
	\begin{proof}[Proof of Theorem \ref{thm:main}]
		Write $\ell=p_1^{\alpha_1}\dots p_s^{\alpha_s}$, where $p_1,\dots,p_s$ are distinct primes. We show that choosing $k=2^{t+1}\max_{r\in [s]}(p_r+\alpha_r)$ suffices, where $t=12\ell\sum_{r=1}^{s}\alpha_{r}$. More precisely, we show the following two statements. Let $\mathcal{F}\subset \{0,1\}^{n}$ such that $\mathcal{F}$ is $k$-closed over $\mathbb{Z}_{\ell}$.
		\begin{description}
			\item[(1)] Then $|\mathcal{F}|\leq 2^{\lfloor n/\ell\rfloor}$.
			\item[(2)] If $|\mathcal{F}|>2^{\lfloor n/\ell\rfloor-1}$, then $[n]$ can be partitioned into sets $A_{1},\dots,A_{\lfloor n/\ell\rfloor},A'$ such that $A_{i}$ is a maximal set of twins and $|A_i| = \ell$ for each $1 \leq i \leq \lfloor n/\ell\rfloor$, $|A'|\leq \ell-1$, and $\mathcal{F}$ vanishes on $A'$.
		\end{description}

		We proceed by induction on $n$. If $n\leq 6s\ell$, the statements are easy to show. Indeed, let $A_1,\dots,A_d,A'$ be a partition of $[n]$ such that $A_{i}$ is a maximal set of twins for $\mathcal{F}$ for $i\in [d]$, and $\mathcal{F}$ vanishes on $A'$. Then $|\mathcal{F}|\leq 2^{d}$. We argue that as $k>2^{t}\geq 2^{12\ell s}> 6s\ell\geq n$, the  characteristic vector of each $A_{i}$ is contained in $(\langle \mathcal{F} \rangle_{\ell})^{k}$. So fix any $1 \leq i \leq d$. First of all, note that there exists  $v_i\in\mathcal{F}$ such that $v_{i}|_{A_i}$ is the all 1 vector. Now for $j\in [d]\setminus \{i\}$, define $v_j$ as follows. 
		If $v_{i}|_{A_j}$ is 0, then let $v_j=v_i$. Otherwise, as the elements of $A_i$ are not twins of the elements of $A_j$, there exists $w_{j}\in\mathcal{F}$ such that one of $w_j|_{A_{i}}$ and $w_{j}|_{A_{j}}$ is the all 1 vector, and the other is 0. If $w_{j}|_{A_j}$ is 0, then set $v_j=w_j$, otherwise set $v_j=v_i-w_j$. In all cases, $v_j$ has the property that $v_{j}|_{A_i}$ is the all 1 vector, and $v_{j}|_{A_j}$ is 0. But then $v_1\cdot \dotsc \cdot v_{d}$ is the characteristic vector of $A_i$, and as $d\leq n<k$, we have $v_1\cdot\dotsc\cdot v_d\in (\langle \mathcal{F} \rangle_{\ell})^{k}$. Now, since $\mathcal{F}$ is $k$-closed, this implies that $\ell$ divides $|A_{i}|$ for $i\in [d]$. But then $d\leq \lfloor n/\ell\rfloor$, and we are done with (1). Also, if $|\mathcal{F}|>2^{\lfloor n/\ell\rfloor-1}$, we must have $d=\lfloor n/\ell\rfloor$, which is only possible if all the sets $A_1,\dots,A_d$ have size $\ell$. Therefore, (2) also holds.
		
	    From now on assume that $n>6s\ell$. First, suppose that there exists $A\subset [n]$ such that $\ell$ divides $|A|$ and $A$ is a set of twins for $\mathcal{F}$. Then the family $\mathcal{F}'=\mathcal{F}|_{[n]\setminus A}$ is also $k$-closed over $\mathbb{Z}_{\ell}$ and $|\mathcal{F}'|\geq \frac{1}{2}|\mathcal{F}|$. By our induction hypothesis, we have $|\mathcal{F}'|\leq 2^{\lfloor (n-|A|)/\ell\rfloor}\leq 2^{\lfloor (n-\ell)/\ell\rfloor}$, so we get $|\mathcal{F}|\leq 2^{\lfloor n/\ell\rfloor}$, and (1) indeed holds. If $|\mathcal{F}|> 2^{\lfloor n/\ell\rfloor-1}$, then $|\mathcal{F}'|>2^{\lfloor (n-\ell)/\ell\rfloor-1}$, so by our induction hypothesis there exists a partition of $[n]\setminus A$ into sets $A_{1},\dots,A_{\lfloor (n-\ell)/\ell\rfloor},A'$ satisfying (2) with respect to $\mathcal{F}'$. Setting $A_{\lfloor n/\ell\rfloor}=A$, the sets  $A_{1},\dots,A_{\lfloor n/\ell\rfloor},A'$ satisfy (2) with respect to $\mathcal{F}$.
		
		Therefore, in order to finish the proof, it is enough to show that if $|\mathcal{F}|>2^{\lfloor n/\ell\rfloor-1}$, then $\mathcal{F}$ has a set of twins of size divisible by $\ell$. Next, we show that if $I\subset [n]$ is large, then the dimension of $\langle \mathcal{F}|_{I}\rangle_p$ cannot be too small for any prime $p$.
		
		\begin{claim}\label{claim:coordinates}
			Let $p$ be a prime and $I\subset [n]$ such that $|I|\geq \ell$. Then 
			$$|I|\leq \ell\dim(\langle \mathcal{F}|_I\rangle_{p})+3\ell.$$
		\end{claim}
		
		\begin{proof}
			Let $V=\langle \mathcal{F}|_{I}\rangle_p$ and $d = \dim(V)$. Then $|V\cap \{0,1\}^{I}|\leq 2^{d}$ by Lemma \ref{lemma:odim}. By the pigeonhole principle, there exists some $v\in \{0,1\}^{I}$ and $\mathcal{F}'\subset \mathcal{F}$ such that $w|_{I}=v$ for every $w\in\mathcal{F}'$, and $|\mathcal{F}'|\geq |\mathcal{F}|/2^{d}$. Let $0\leq m\leq \ell-1$ such that $||v||\equiv m \pmod \ell$, and for every $w\in \mathcal{F}'$, let $w^{*}$ be the vector we get after replacing the coordinates in $I$ with $m$ coordinates of $1$ entries. This gives a family $\mathcal{F}''=\{w^{*}:w\in\mathcal{F}'\}\subset \{0,1\}^{n-|I|+m}$ such that $|\mathcal{F}''|=|\mathcal{F}'|\geq |\mathcal{F}|/2^{d}$ and $\mathcal{F}''$ is $k$-closed over $\mathbb{Z}_{\ell}$. The latter is true because if $w_1,\dots,w_k\in\mathcal{F}'$, \nolinebreak then  
			$$||w_1^{*}\cdot\dotsc \cdot w_{k}^{*}||=||(w_1|_{[n]\setminus I})\cdot\dotsc\cdot (w_{k}|_{[n]\setminus I})||+m\equiv||w_1\cdot\dotsc\cdot w_{k}||\equiv 0 \pmod \ell .$$
			Therefore, by our induction hypothesis, we have
			$$2^{\lfloor n/\ell\rfloor-d-1}<\frac{|\mathcal{F}|}{2^{d}}\leq |\mathcal{F}''|\leq 2^{\lfloor (n-|I|+m)/\ell\rfloor}< 2^{\lfloor n/\ell\rfloor+2-|I|/\ell}.$$
			Comparing the left- and right-hand-side gives the desired inequality $|I|\leq \ell d+3\ell$. 
		\end{proof}	
		
		We continue with the proof of the theorem.
		We can assume that $\mathcal{F}$ is non-reducible, otherwise we are immediately done 
		by applying our induction hypothesis. Let $A_{1},\dots,A_{d}$ be the unique partition of $[n]$ such that $A_{i}$ is a maximal set of twins for $\mathcal{F}$. Let $r\in [s]$, and apply Lemma \ref{lemma:smalldim} to $\mathcal{F}$ with respect to the prime power $p_r^{\alpha_r}$. Let $$B_r=\bigcup_{\substack{i\in [d]\\|A_i|\not\equiv 0\ \mathrm{(mod }\ p_r^{\alpha_r})}}A_{i}.$$  As $\mathcal{F}$ is $2^{t+1}(p_r+\alpha_r)$-closed, we get from Lemma \ref{lemma:smalldim} that $\dim(\langle \mathcal{F}|_{B_r}\rangle_{p_r})\leq \frac{6n\alpha_r}{t}$. But then by Claim \ref{claim:coordinates}, we also have 
		$$|B_r|\leq \frac{6n\alpha_r\ell}{t}+3\ell.$$
		Let $B=\bigcup_{r=1}^{s}B_{r}$, then 
		$$|B|\leq\sum_{r=1}^{s}|B_{r}|\leq 3s\ell+\frac{6n\ell}{t}\sum_{r=1}^{s}\alpha_r <n,$$
		where the last inequality holds by the choice of $t$ and noting that $n > 6sl$. Observe that $B$ is the union of those maximal sets of twins $A_{i}$ where $|A_i|$ is not divisible by $\ell$. Therefore, as $|B|< n$ and $A_{1},\dots,A_{d}$ form a partition of $[n]$, there must exist $i\in [d]$ such that $\ell$ divides $|A_{i}|$, finishing the proof.
	\end{proof}
	
	Let us remark that in case $\ell$ is a prime power, we can prove something slightly stronger following the same proof. This might be of independent interest.
	
	\begin{theorem}\label{thm:dimcor}
		Let $p$ be a prime, $\ell=p^{\alpha}$, then there exists $k$ such that the following holds. Let $\mathcal{F}\subset \{0,1\}^{n}$ such that $\mathcal{F}$ is $k$-closed over $\mathbb{Z}_{\ell}$. Then $\dim(\langle \mathcal{F}\rangle_{p})\leq \lfloor n/\ell\rfloor$.
	\end{theorem}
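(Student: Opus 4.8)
The plan is to re-run the proof of Theorem~\ref{thm:main} essentially verbatim, tracking the quantity $\dim(\langle\mathcal{F}\rangle_p)$ in place of the cardinality $|\mathcal{F}|$. The point that makes this work is that for a non-reducible family $\mathcal{F}$ with maximal sets of twins $A_1,\dots,A_d$, every $v\in\mathcal{F}$ — and hence, by linearity, every $v\in\langle\mathcal{F}\rangle_p$ — is constant on each $A_i$, so $\langle\mathcal{F}\rangle_p\subseteq\langle\mathds{1}_{A_1},\dots,\mathds{1}_{A_d}\rangle_p$ and therefore $\dim(\langle\mathcal{F}\rangle_p)\le d$. Since restricting $\mathcal{F}$ to its set of non-zero coordinates changes neither $\dim(\langle\mathcal{F}\rangle_p)$ nor the $k$-closedness hypothesis (and does not increase $n$), I will assume throughout that $\mathcal{F}$ is non-reducible, and let $A_1,\dots,A_d$ denote its maximal sets of twins.

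Keeping the constants of Theorem~\ref{thm:main}, namely $t=12\ell\alpha$ and $k=2^{t+1}(p+\alpha)$, I would prove $\dim(\langle\mathcal{F}\rangle_p)\le\lfloor n/\ell\rfloor$ by induction on $n$. For the base case $n\le 6\ell$ I would reproduce the base-case argument of Theorem~\ref{thm:main}: since $k>2^{t}\ge 2^{12\ell}>6\ell\ge n\ge d$, each $\mathds{1}_{A_i}$ is a $k$-fold coordinate-wise product of elements of $\langle\mathcal{F}\rangle_\ell$, so Claim~\ref{claim:k-closed}(2) forces $\ell\mid|A_i|$ for all $i$; hence $n=\sum_i|A_i|\ge d\ell$ and $\dim(\langle\mathcal{F}\rangle_p)\le d\le\lfloor n/\ell\rfloor$. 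For the inductive step $n>6\ell$ I would distinguish two cases. If some maximal twin set $A_i$ has $|A_i|\ge\ell$, pick $A\subseteq A_i$ with $|A|=\ell$ and pass to $\mathcal{F}'=\mathcal{F}|_{[n]\setminus A}$; this is again $k$-closed over $\mathbb{Z}_\ell$, since every element of every $\mathcal{F}^j$ is constant on the set of twins $A$ and so has weight on $A$ divisible by $\ell$. The restriction map $\langle\mathcal{F}\rangle_p\to\langle\mathcal{F}'\rangle_p$ is onto with kernel contained in $\mathbb{F}_p\cdot\mathds{1}_A$ (a vector of $\langle\mathcal{F}\rangle_p$ supported on $A$ must, by the twin property, be a scalar multiple of $\mathds{1}_A$), so induction yields $\dim(\langle\mathcal{F}\rangle_p)\le\dim(\langle\mathcal{F}'\rangle_p)+1\le\lfloor(n-\ell)/\ell\rfloor+1=\lfloor n/\ell\rfloor$. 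Otherwise every $A_i$ has $1\le|A_i|\le\ell-1$, hence no $|A_i|$ is divisible by $p^\alpha$, so the set $B$ produced by Lemma~\ref{lemma:smalldim} is all of $[n]$; applying that lemma (legitimate since $\mathcal{F}$ is non-reducible and $2^{t+1}(p+\alpha)$-closed over $\mathbb{Z}_{p^\alpha}$) gives $\dim(\langle\mathcal{F}\rangle_p)=\dim(\langle\mathcal{F}|_B\rangle_p)\le 6n\alpha/t=n/(2\ell)\le\lfloor n/\ell\rfloor$, the last step using $n>6\ell$.

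The step I expect to need the most care — though it is bookkeeping rather than a new idea — is the case split in the inductive step: one must delete a set of twins lying inside a \emph{single} maximal twin class, so that the kernel of the restriction map stays at most one-dimensional; consequently the case where every maximal twin class has size below $\ell$ cannot be reduced to a smaller instance and has to be closed off directly, and it is precisely there that the structural input of Lemma~\ref{lemma:smalldim} (and, behind it, Theorem~\ref{cor:stability} and Lemma~\ref{lemma:primepower}) is indispensable for ruling out a large $\dim(\langle\mathcal{F}\rangle_p)$.
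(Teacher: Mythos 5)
Your proof is correct and follows essentially the same inductive strategy as the paper, with the same constants and the same central appeal to Lemma~\ref{lemma:smalldim} in the case where every maximal twin class has size below $\ell$. The one minor deviation is that where the paper passes to a \emph{maximal} $k$-closed family so that the characteristic vector of the deleted twin set actually lies in $\mathcal{F}$ (giving $\dim(V') = \dim(V)-1$ exactly), you instead bound the kernel of the restriction map $\langle\mathcal{F}\rangle_p \to \langle\mathcal{F}'\rangle_p$ directly via the twin structure, obtaining the same inequality $\dim(\langle\mathcal{F}\rangle_p)\le\dim(\langle\mathcal{F}'\rangle_p)+1$ without invoking maximality — a slightly cleaner route to the same estimate.
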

	
	\begin{proof}
		We show that $k=2^{t+1}(p+\alpha)$ suffices, where $t=12\alpha\ell$. We will proceed by induction on $n$. In case $n\leq \ell$, the statement is trivial, so assume that $n>\ell$. It suffices to prove the theorem for $\mathcal{F}$ which is maximal $k$-closed over $\mathbb{Z}_{\ell}$, namely that we cannot add elements to $\mathcal{F}$ and keep it $k$-closed. So assume that $\mathcal{F}$ is maximal in this sense. 
		Let $V=\langle \mathcal{F}\rangle_p$.
		
		Suppose that there exists a set $A\subset [n]$ of twins of size $\ell$ for $\mathcal{F}$. If $v\in \{0,1\}^{n}$ is the characteristic vector of $A$, then $v\in \mathcal{F}$, otherwise $\{v\}\cup \mathcal{F}$ contradicts the maximality of $\mathcal{F}$.  Let $\mathcal{F}'=\mathcal{F}|_{[n]\setminus A}$ and $V'=\langle \mathcal{F}'\rangle_p$, then $\dim(V')=\dim(V)-1$ (as $v$ is an element of $V$) and $\mathcal{F}'$ is also $k$-closed over $\mathbb{Z}_{\ell}$. Therefore, by our induction hypothesis, $\dim(V')\leq \lfloor (n-\ell)/\ell\rfloor$, so $\dim(V)\leq \lfloor n/\ell\rfloor$.
		
		In the rest of the proof, we show that if $\dim(V)\geq \lfloor n/\ell\rfloor$, then there exists a set $A\subset [n]$ of twins of size $\ell$ for $\mathcal{F}$. We can assume that $\mathcal{F}$ is non-reducible, otherwise apply our induction hypothesis. Let $A_{1},\dots,A_{d}$ be the unique partition of $[n]$ such that $A_{i}$ is a maximal set of twins for $\mathcal{F}$. Apply Lemma \ref{lemma:smalldim} to $\mathcal{F}$ with respect to the prime power $p^{\alpha}$. Let $$B=\bigcup_{\substack{i\in [d]\\|A_i|\not\equiv 0\ \mathrm{(mod }\ p^{\alpha})}}A_{i}.$$  As $\mathcal{F}$ is $2^{t+1}(p+\alpha)$-closed, we get that 
		$$\dim(V|_{B})\leq \frac{6n\alpha}{t}< \left\lfloor \frac{n}{\ell}\right\rfloor \leq \dim(V).$$
		Therefore, $B\neq [n]$, and at least one of $|A_{1}|,\dots,|A_{d}|$ is divisible by $p^{\alpha}$. Hence, $\mathcal{F}$ has a set of twins of size $\ell = p^{\alpha}$. This finishes the proof.
	\end{proof}
	
	Finally, we note that if $\ell$ is a prime, a similar proof shows that Theorem \ref{thm:dimcor} holds for every $\mathcal{F}\subset \mathbb{F}_{\ell}^{n}$ (that is, the elements of $\mathcal{F}$ need not be 0-1 vectors).
	
	\begin{theorem}
		Let $p$ be a prime, then there exists $k$ such that the following holds. Let $\mathcal{F}\subset \mathbb{F}_p^{n}$ such that $\mathcal{F}$ is $k$-closed. Then $\dim(\langle \mathcal{F}\rangle_{p})\leq \lfloor n/p\rfloor$. In particular, $|\mathcal{F}|\leq p^{\lfloor n/p\rfloor}$, and this bound is the best possible.
	\end{theorem}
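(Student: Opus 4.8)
The plan is to re-run the proof of Theorem~\ref{thm:dimcor} in the case $\alpha=1$, noting that essentially all of the machinery is insensitive to whether the vectors are $0$--$1$ valued. The ``in particular'' is immediate: $\mathcal{F}\subseteq\langle\mathcal{F}\rangle_{p}$, so $|\mathcal{F}|\le|\langle\mathcal{F}\rangle_{p}|=p^{\dim\langle\mathcal{F}\rangle_{p}}$, and tightness is witnessed by the block--constant family --- partition a $p\lfloor n/p\rfloor$--element subset of $[n]$ into blocks $A_{1},\dots,A_{\lfloor n/p\rfloor}$ of size $p$ and take all vectors that are constant on each $A_{i}$ and vanish outside $\bigcup_{i}A_{i}$; this has $p^{\lfloor n/p\rfloor}$ members and is $k$--closed for every $k$, since a coordinate--wise product of such vectors is again of this form and each block contributes a multiple of $p$ to $\|\cdot\|$. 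So it suffices to prove $\dim\langle\mathcal{F}\rangle_{p}\le\lfloor n/p\rfloor$, and we may replace $\mathcal{F}$ by $V:=\langle\mathcal{F}\rangle_{p}$, which is again $k$--closed by Claim~\ref{claim:k-closed}(2). We take $k=2^{t+1}$ with $t$ a sufficiently large multiple of $p$ (e.g.\ $t=13p$). First note that Lemma~\ref{lemma:bilinear}, Lemma~\ref{lemma:prime}, Theorem~\ref{lemma:stability} and Claims~\ref{claim:k-closed}--\ref{claim:twins} are either already stated for an arbitrary subspace of $\mathbb{F}_{p}^{n}$, or have proofs using only linearity, the multiplicativity of the coordinate--wise product, and the fact that twinhood is an equivalence relation; hence they all apply unchanged.

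The only genuine adjustment is to the proof of Lemma~\ref{lemma:smalldim} (with $\ell=p$), which used idempotence ($v\cdot v=v$) in the chain $\mathcal{F}=\mathcal{F}_{0}\subseteq\mathcal{F}_{1}\subseteq\cdots$. I would instead iterate \emph{bounded--length} products: set $\mathcal{G}_{i}=\bigcup_{j=1}^{2^{i}}\mathcal{F}^{j}$, so that $\mathcal{G}_{i-1}\subseteq\mathcal{G}_{i}$, $\mathcal{G}_{i-1}\cdot\mathcal{G}_{i-1}\subseteq\mathcal{G}_{i}$, every element of $\mathcal{G}_{i}$ is a product of at most $2^{i}$ elements of $\mathcal{F}$, and hence $\langle\mathcal{G}_{i}\rangle_{p}$ is $2$--closed whenever $2^{i+1}\le k$. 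Pigeonhole on the increasing sequence $\dim\langle\mathcal{G}_{0}\rangle_{p}\le\cdots\le\dim\langle\mathcal{G}_{t}\rangle_{p}\le n$ gives $r<t$ with $\dim\langle\mathcal{G}_{r+1}\rangle_{p}\le\dim\langle\mathcal{G}_{r}\rangle_{p}+n/t=:d'+n/t$; since $\langle\mathcal{G}_{r}\rangle_{p}\cdot\langle\mathcal{G}_{r}\rangle_{p}\subseteq\langle\mathcal{G}_{r+1}\rangle_{p}$ by Claim~\ref{claim:k-closed}(1), applying Theorem~\ref{lemma:stability} to $V_{r}:=\langle\mathcal{G}_{r}\rangle_{p}$ partitions $[n]$ into maximal twin sets $A_{1},\dots,A_{d'}$ of $V_{r}$ and an exceptional set $C$ with $\dim(V_{r}|_{C})\le 2n/t$ (and, by Claim~\ref{claim:twins}(2) and since passing to a span does not affect the twin relation, these $A_{i}$ are also the maximal twin sets of $\mathcal{F}$ and of $V$). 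To apply the bilinear--form argument of Lemma~\ref{lemma:prime} I would descend --- as in the $\alpha>1$ step of the proof of Lemma~\ref{lemma:primepower} --- to $U:=\{v\in V_{r}:v|_{C}=0\}$, which has $\dim U\ge d'-2n/t$ and, because its members vanish on $C$, restricts injectively to $[n]\setminus C$ and is $2$--closed there ($\|u\cdot v\|=\|(u\cdot v)|_{[n]\setminus C}\|$ for $u,v\in U$). Lemma~\ref{lemma:prime} then shows that at least $d'-4n/t$ of $|A_{1}|,\dots,|A_{d'}|$ are divisible by $p$; since $\dim V\le\dim V_{r}=d'$, this gives the statement we need: if $V\le\mathbb{F}_{p}^{n}$ is non--reducible and $k$--closed and $B$ is the union of the maximal twin sets of $V$ of size not divisible by $p$, then $\dim(V|_{B})\le 6n/t$.

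With this in hand I would induct on $n$. If $n<p$ then $\lfloor n/p\rfloor=0$ and we must show $V=\{0\}$: for any $v\in V$ we have $v^{p-1}\in V^{p-1}$, so $\|v^{p-1}\|=|\mathrm{supp}(v)|$ must be $0$ in $\mathbb{F}_{p}$ since $V$ is $k$--closed and $k\ge p-1$, which forces $\mathrm{supp}(v)=\emptyset$. For $n\ge p$: if some coordinate is $0$ on all of $V$, delete it and apply induction (this can only decrease $\lfloor n/p\rfloor$). Otherwise $V$ is non--reducible, with maximal twin sets $A_{1},\dots,A_{d}$ partitioning $[n]$. If some $|A_{i}|$ is divisible by $p$, pick a $p$--element subset $A\subseteq A_{i}$ (still a set of twins); then $V|_{[n]\setminus A}$ is again $k$--closed, since the deleted block of size $p$ contributes a multiple of $p$ to every $\|v_{1}\cdots v_{j}\|$, and the kernel of $V\to V|_{[n]\setminus A}$ lies in $\langle\mathds{1}_{A}\rangle$, so by induction $\dim V\le\dim(V|_{[n]\setminus A})+1\le\lfloor(n-p)/p\rfloor+1=\lfloor n/p\rfloor$. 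If no $|A_{i}|$ is divisible by $p$, then in the adapted Lemma~\ref{lemma:smalldim} we have $B=[n]$, whence $\dim V=\dim(V|_{[n]})\le 6n/t<\lfloor n/p\rfloor$ by the choice of $t$ (which one checks works for every $n\ge p$). This completes the induction, so $\dim\langle\mathcal{F}\rangle_{p}\le\lfloor n/p\rfloor$ and $|\mathcal{F}|\le p^{\lfloor n/p\rfloor}$, with equality for the block--constant family above.

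The step I expect to be the main obstacle is the adapted Lemma~\ref{lemma:smalldim}: one has to check that replacing $\mathcal{F}_{i}=\mathcal{F}_{i-1}^{2}$ by $\mathcal{G}_{i}=\bigcup_{j\le 2^{i}}\mathcal{F}^{j}$ preserves all the bookkeeping (monotonicity of the dimensions, the containment $V_{r}\cdot V_{r}\subseteq\langle\mathcal{G}_{r+1}\rangle_{p}$, and the $2$--closedness of each $\langle\mathcal{G}_{i}\rangle_{p}$), and --- more subtly --- that before invoking Lemma~\ref{lemma:prime} one must pass to the subspace vanishing on the exceptional coordinates, since a coordinate projection of a $2$--closed space need not be $2$--closed. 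Everything else carries over verbatim.
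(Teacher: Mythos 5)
Your proof is correct and takes essentially the route the paper leaves implicit in its remark that the argument of Theorem~\ref{thm:dimcor} extends to arbitrary $\mathcal{F}\subset\mathbb{F}_p^n$. You have correctly pinned down the genuine adaptations needed: replacing the idempotence-based chain $\mathcal{F}_i=\mathcal{F}_{i-1}\cdot\mathcal{F}_{i-1}$ of Lemma~\ref{lemma:smalldim} by the bounded-length products $\mathcal{G}_i=\bigcup_{j\le 2^i}\mathcal{F}^j$ (which preserves the containment, the $2$-closedness of each span, and the small-doubling hypothesis for Theorem~\ref{lemma:stability}), and passing to the subspace vanishing on the exceptional set before invoking Lemma~\ref{lemma:prime}, since a coordinate projection of a $2$-closed space need not itself be $2$-closed.
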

	
	\section{Proof of the main result}
	
	In this section, we show how to deduce Theorem \ref{thm:main0} from Theorem \ref{thm:main}. Let us start with the following variant of the well known Oddtown theorem \cite{B69}, see also \cite{BF92} for related results.
	
	\begin{lemma}\label{lemma:oddtown}
		Let $\ell,m,n$ be positive integers, and let $A_1,\dots,A_m,B_1,\dots,B_m\subset [n]$ such that $\ell\nmid|A_{i}\cap B_{i}|$ for $i\in [m]$, but $\ell$ divides $|A_{i}\cap B_{j}|$ for $i\neq j$. Then $m\leq sn$, where $s$ is the number of distinct prime divisors of~$\ell$.
	\end{lemma}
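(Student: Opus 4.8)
The statement is a "multi-prime" generalization of the Oddtown theorem, so the natural plan is to reduce to the single-prime case and then sum over the primes dividing $\ell$. Write $\ell = p_1^{\alpha_1}\cdots p_s^{\alpha_s}$. For each index $i \in [m]$ the hypothesis says $\ell \nmid |A_i \cap B_i|$, hence there is at least one prime $p_{r(i)}$ (with $r(i) \in [s]$) such that $p_{r(i)}^{\alpha_{r(i)}} \nmid |A_i \cap B_i|$; in particular, writing things modulo $p_{r(i)}$ after first reducing the exponent, there is a prime $p$ dividing $\ell$ with $p \nmid |A_i\cap B_i| \cdot p^{\alpha-1}$ — more simply, choosing $p = p_{r(i)}$ we have $|A_i \cap B_i| \not\equiv 0 \pmod{p^{\alpha_{r(i)}}}$, so in particular $v_{p_{r(i)}}(|A_i\cap B_i|) < \alpha_{r(i)}$. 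Partition $[m]$ into classes $I_1,\dots,I_s$ where $I_r = \{i : r(i) = r\}$ (breaking ties arbitrarily). It suffices to show $|I_r| \le n$ for each $r$, since then $m = \sum_{r=1}^s |I_r| \le sn$.

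Fix $r$ and set $p = p_r$, $\alpha = \alpha_r$. For $i \in I_r$ we have $|A_i \cap B_i| \not\equiv 0 \pmod{p^{\alpha}}$ but $|A_i \cap B_j| \equiv 0 \pmod{p^{\alpha}}$ for all $j \ne i$ (this holds because $\ell \mid |A_i \cap B_j|$ and $p^\alpha \mid \ell$). The plan is the standard linear-algebra argument over $\mathbb{F}_p$: let $a_i, b_i \in \mathbb{F}_p^n$ be the characteristic vectors of $A_i, B_i$, so $\langle a_i, b_j \rangle \equiv |A_i \cap B_j| \pmod p$. The difficulty is that divisibility by $p^\alpha$ does not directly translate to a statement over $\mathbb{F}_p$; the inner products $\langle a_i, b_j\rangle$ are only controlled mod $p$, and mod $p$ the diagonal entries $\langle a_i,b_i\rangle$ need not be nonzero. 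To get around this, I would instead work with a suitable power trick: replace $b_i$ by $b_i$ itself but exploit that $|A_i \cap B_i|$, having $p$-adic valuation exactly some $\gamma_i < \alpha$, can be "renormalized." Concretely, pass to the integers: the $m \times m$ matrix $N$ with $N_{ij} = |A_i \cap B_j|$ satisfies $N = P D Q$ (Smith-normal-form style) — rather, observe that $N \equiv \mathrm{diag}(|A_i\cap B_i|) \pmod{p^\alpha}$, and each diagonal entry is a unit times $p^{\gamma_i}$ with $\gamma_i < \alpha$. Hence $\det N \not\equiv 0 \pmod{p^{\alpha \cdot m}}$, in fact $v_p(\det N) = \sum_i \gamma_i < \alpha m$; but also $N = X Y^{\top}$ where $X$ is the $m \times n$ matrix with rows $a_i$ and $Y$ the $m\times n$ matrix with rows $b_i$, so $\mathrm{rank}_{\mathbb{Q}} N \le n$. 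If $m > n$ then $\det N = 0$, contradicting $v_p(\det N) < \infty$. Therefore $|I_r| = m_r \le n$.

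Let me restate the clean version of the core step, since that is the crux. For the fixed prime $p = p_r$ and the index set $I_r$, form the integer matrix $N = (|A_i \cap B_j|)_{i,j \in I_r}$. On one hand $N = X Y^\top$ with $X, Y$ the characteristic-vector matrices (of size $|I_r| \times n$), so $\mathrm{rank}\, N \le n$. On the other hand, reducing modulo $p^{\alpha}$, $N$ is congruent to a diagonal matrix whose entries are nonzero mod $p^\alpha$; this forces $\det N \ne 0$ over $\mathbb{Z}$ whenever $|I_r| \le$ (anything), and more importantly shows the rows of $N$ are linearly independent over $\mathbb{Q}$ — because a nontrivial rational (hence, after clearing denominators, integral and primitive) dependence $\sum_i c_i (\text{row}_i) = 0$ would, read in column $i_0$ with $c_{i_0}$ a $p$-unit, give $c_{i_0} N_{i_0 i_0} \equiv -\sum_{i \ne i_0} c_i N_{i i_0} \equiv 0 \pmod {p^\alpha}$, contradicting $p^\alpha \nmid N_{i_0 i_0}$. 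Hence $\mathrm{rank}\, N = |I_r|$, and combined with $\mathrm{rank}\, N \le n$ we get $|I_r| \le n$. Summing over $r \in [s]$ yields $m \le sn$.

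The main obstacle, as indicated, is precisely the mismatch between the modular hypothesis (divisibility by $\ell$, equivalently by each $p_r^{\alpha_r}$) and the fact that a clean rank bound lives over a field; the resolution is to argue integrally via $p$-adic valuations of the determinant / a primitive integral dependence rather than trying to project to $\mathbb{F}_p$ directly. Once that step is in place, the decomposition $[m] = I_1 \cup \dots \cup I_s$ and additivity finish the argument, and the bound $m \le sn$ is seen to be essentially tight by taking $s$ disjoint "Oddtown" systems, one for each prime $p_r$, living on disjoint blocks of coordinates.
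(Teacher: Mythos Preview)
Your proof is correct and essentially the same as the paper's. Both arguments split according to the prime powers $p_r^{\alpha_r}$ dividing $\ell$ (you partition $[m]$ into classes $I_r$, the paper uses pigeonhole to find one $r$ with $\geq \lceil m/s\rceil$ indices), and then for a fixed prime $p=p_r$ prove linear independence over $\mathbb{Q}$ via a primitive integral dependence and the observation that the off-diagonal entries are $0 \pmod{p^{\alpha}}$ while the diagonal entry is not---exactly the contradiction you derive in your ``clean version.'' The only cosmetic difference is that the paper shows directly that the characteristic vectors $v_i$ of the $A_i$ are independent in $\mathbb{Q}^n$, whereas you show the rows of $N=(|A_i\cap B_j|)$ are independent and then invoke the factorization $N=XY^\top$; the underlying $p$-adic argument is identical.
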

	
	\begin{proof}
		Write $\ell=p_1^{\alpha_1}\dots p_{s}^{\alpha_s}$, where $p_1,\dots,p_s$ are distinct primes. Let $v_{i}$ and $w_{i}$ be the characteristic vectors of $A_{i}$ and $B_{i}$ over $\mathbb{Q}$, respectively. Let $t=\lceil m/s\rceil$, then there exists $r\in [s]$ such that for at least $t$ of the indices $i\in [m]$, we have that $|A_{i}\cap B_{i}|$ is not divisible by $p_{r}^{\alpha_r}$. Without loss of generality let these $t$ indices be $1,\dots,t$. We show that $v_1,\dots,v_t$ are linearly independent (over $\mathbb{Q}$), which then implies $t\leq n$ and $m\leq sn$.
		
		Suppose this is not the case, then there exist $c_1,\dots,c_t\in\mathbb{Z}$, not all zero, such that $\sum_{i=1}^{t}c_{i}v_{i}=0$. We can assume that at least one of $c_1,\dots,c_t$ is not divisible by $p_r$, otherwise we can replace $c_i$ with $c_i'=c_i/p_r$ for every $i\in [t]$. Let $k\in [t]$ be an index such that $p_r\nmid c_{k}$. Consider the equality 
		$$0=\left\langle\sum_{i=1}^{t}c_{i} v_{i},w_{k}\right\rangle=\sum_{i=1}^{t}c_{i} |A_i\cap B_k|.$$
		We have $p_r^{\alpha_{r}}\mid c_i|A_{i}\cap B_{k}|$ if $i\neq k$, and $p_r^{\alpha_r}\nmid c_k|A_{k}\cap B_{k}|$, so $p_r^{\alpha_{r}}\nmid \langle\sum_{i=1}^{t}c_{i} v_{i},w_{k}\rangle$, contradiction.
	\end{proof}
	
	Say that $\mathcal{F}\subset 2^{[n]}$ is \emph{weakly $k$-closed over $\mathbb{Z}_{\ell}$} if the intersection of any $k$ distinct elements of $\mathcal{F}$ is divisible by $\ell$. Also, say that $\mathcal{F}\subset 2^{[n]}$ is \emph{$k$-closed over $\mathbb{Z}_{\ell}$} if the family formed by the characteristic vectors of the elements of $\mathcal{F}$ is $k$-closed over $\mathbb{Z}_{\ell}$. So $\mathcal{F}$ is $k$-closed over $\mathbb{Z}_{\ell}$ if and only if the intersection of any $k$ not necessarily distinct elements of $\mathcal{F}$ is divisible by $\ell$. We need the following useful observation, which for $\ell=2$ appears in \cite{SV18}. 
	
	\begin{lemma}\label{lemma:weakly}
		Let $\ell,k$ be positive integers, and let $s$ be the number of distinct prime divisors of $\ell$. Let $\mathcal{F}\subset 2^{[n]}$ such that $\mathcal{F}$ is weakly $k$-closed over $\mathbb{Z}_{\ell}$.  Then there exists $\mathcal{F}'\subset \mathcal{F}$ such that $|\mathcal{F}'|\geq |\mathcal{F}|-sk^2n$, and $\mathcal{F}'$ is $k$-closed over $\mathbb{Z}_{\ell}$. 
	\end{lemma}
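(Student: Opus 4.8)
The plan is to start from the weakly $k$-closed family $\mathcal{F}$ and greedily peel off a bounded number of "bad witnesses'' until what remains is genuinely $k$-closed. The obstruction to being $k$-closed comes from tuples of $k$ \emph{not necessarily distinct} sets from $\mathcal{F}$ whose intersection is not divisible by $\ell$; since the weak hypothesis already handles $k$ distinct sets, every such bad tuple must involve a repeated set. So the first step is to understand exactly what a minimal bad configuration looks like: if $F_1,\dots,F_k\in\mathcal{F}$ (with repetitions) satisfy $\ell\nmid |F_1\cap\dots\cap F_k|$, then among the distinct sets appearing there are at most $k-1$ of them, say $G_1,\dots,G_j$ with $j\le k-1$, and already $\ell\nmid |G_1\cap\dots\cap G_j|$ because the intersection only depends on the underlying set of sets. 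Thus $\mathcal{F}$ fails to be $k$-closed iff there is a subfamily of size at most $k-1$ whose intersection is not divisible by $\ell$.

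Next I would set up the extraction. Call a subfamily $\mathcal{G}\subset\mathcal{F}$ with $|\mathcal{G}|\le k-1$ and $\ell\nmid|\bigcap\mathcal{G}|$ a \emph{blocker}. By the weak hypothesis every blocker has size between $2$ and $k-1$. The idea is: repeatedly pick an inclusion-minimal blocker $\mathcal{G}$ and delete one well-chosen set from it, shrinking $\mathcal{F}$ by one each time, and bound the number of rounds. To bound the rounds, fix a minimal blocker $\mathcal{G}=\{G_1,\dots,G_j\}$ ($2\le j\le k-1$). Write $C=G_2\cap\dots\cap G_j$ (the intersection of all but $G_1$); by minimality $\ell\mid |C|$ would force... — actually by minimality of $\mathcal{G}$, for each $i$ the intersection $\bigcap_{t\ne i}G_t$ \emph{is} divisible by $\ell$, while $\bigcap_t G_t$ is not. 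So each $G_i$ plays the role of an "Oddtown'' set: setting $A_i = \bigcap_{t\ne i}G_t$ and $B_i = G_i$ gives $\ell\nmid|A_i\cap B_i| = |\bigcap_t G_t|$. The plan is to use Lemma~\ref{lemma:oddtown} to show that no single element of $[n]$ can be "the reason'' for too many of the deleted sets, yielding the bound $sk^2n$ (the factor $s$ from Lemma~\ref{lemma:oddtown}, one factor $k$ from the size of a blocker, the other factor $k$ from a counting over which coordinate is responsible, and $n$ from the number of coordinates).

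More concretely, here is the accounting I would carry out. Process minimal blockers one at a time; when a minimal blocker $\mathcal{G}=\{G_1,\dots,G_j\}$ is found, note $D:=G_1\cap\dots\cap G_j$ has $|D|\not\equiv 0\pmod\ell$ but is nonempty, so it contains an element, and in fact we can pick a coordinate $x$ and a prime power $p^\alpha\| \ell$ such that $p^\alpha\nmid|D|$. Delete, say, $G_1$ from $\mathcal{F}$. Charge this deletion to the pair (coordinate $x$, prime $p_r$). I claim each pair $(x,p_r)$ is charged at most $k$ times: if it were charged $k+1$ times, then looking at the minimal blockers $\mathcal{G}^{(1)},\dots,\mathcal{G}^{(k+1)}$ responsible (each of size $\le k-1$) and the deleted sets $G_1^{(m)}$, one assembles from the "$A$-part'' $\bigcap_{t\ge 2}G_t^{(m)}$ and "$B$-part'' $G_1^{(m)}$ a cross-intersecting Oddtown-type system modulo $p_r^{\alpha_r}$ — using that after deleting $G_1^{(m)}$ the set $G_1^{(m)}$ never again appears in a later blocker, so the later intersections with $G_1^{(m)}$ are still divisible by $\ell$ — contradicting Lemma~\ref{lemma:oddtown} with $n$ coordinates once the count exceeds $sn$, hence exceeds $k$ only through the crude bound. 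Since there are $n$ coordinates and $s$ primes and at most $k$ charges each, and each blocker involves at most $k$ candidate coordinates, the total number of deletions is at most $sk^2n$, and the resulting $\mathcal{F}'$ has no blocker, hence is $k$-closed.

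\textbf{Main obstacle.} The delicate point is making the "no coordinate is charged too often'' argument rigorous: one must choose the deleted set and the charged coordinate so that the sets accumulated across different rounds genuinely form a cross-intersecting family of the kind Lemma~\ref{lemma:oddtown} handles (pairwise-divisible-by-$\ell$ intersections off the diagonal, not divisible on the diagonal, all sitting in $[n]$). The key structural fact enabling this is that once a set is deleted it is gone, so it cannot reappear inside a later minimal blocker — this is exactly what forces the "off-diagonal'' intersections to stay divisible by $\ell$, whereas the minimality of each blocker gives the "diagonal'' non-divisibility. Getting the bookkeeping of $s$, $k$ and $n$ to land precisely on $sk^2n$ is then routine counting, but the setup of the Oddtown system is where care is needed.
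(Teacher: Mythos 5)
Your overall strategy — repeatedly find a small bad subfamily, delete some of its members, and use Lemma~\ref{lemma:oddtown} to bound the total number of deletions by $sk^2n$ — is the right shape and is indeed what the paper does. But your specific implementation has a real gap at the crucial step, and the paper fixes it with a different (and cleaner) iteration.

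The gap is in the off-diagonal hypothesis of Lemma~\ref{lemma:oddtown}. To invoke it you must produce pairs $(A_i,B_i)$ with $\ell\nmid|A_i\cap B_i|$ but $\ell\mid|A_i\cap B_j|$ for all $i\ne j$. You set $A_i=\bigcap_{t\ge 2}G_t^{(i)}$ and $B_i=G_1^{(i)}$ from your $i$-th inclusion-minimal blocker, and you claim the off-diagonal divisibility holds because ``after deleting $G_1^{(m)}$ the set $G_1^{(m)}$ never again appears in a later blocker.'' That inference does not follow: the fact that $G_1^{(m)}$ is absent from later blockers tells you nothing about $|G_1^{(m)}\cap\bigcap_{t\ge 2}G_t^{(i)}|$ for $i\ne m$. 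Inclusion-minimality of a single blocker only gives you divisibility of the proper subintersections of \emph{that} blocker; it provides no control over cross-intersections between two different blockers, which is exactly what you need. Relatedly, the ``each $(x,p_r)$ pair is charged at most $k$ times'' step is unsupported — Lemma~\ref{lemma:oddtown} yields a bound of $sn$ on the number of $(A_i,B_i)$ pairs in a single system, not a per-coordinate bound of $k$ — and the final arithmetic landing on $sk^2n$ from $n$ coordinates, $s$ primes, and ``$\le k$ charges each'' doesn't parse (that would give $skn$, and the extra factor of $k$ is not accounted for).

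The missing idea is to control the off-diagonal terms by working with \emph{disjoint} bad tuples of the \emph{maximal} bad size. Concretely: let $t$ be the largest integer such that some $t$ distinct members of $\mathcal{F}$ have intersection not divisible by $\ell$ (so $t<k$ by weak $k$-closedness). Form the $t$-uniform hypergraph $\mathcal{H}$ on vertex set $\mathcal{F}$ whose edges are the bad $t$-tuples, and take a maximal matching $\{C_{i,1},\dots,C_{i,t}\}$, $i\in[m]$. Set $A_i=C_{i,1}$ and $B_i=C_{i,1}\cap\dots\cap C_{i,t}$. Then $\ell\nmid|A_i\cap B_i|$ by construction, and for $i\ne j$ the set $A_i\cap B_j$ is an intersection of $t+1$ \emph{distinct} members of $\mathcal{F}$ (disjointness of matching edges guarantees $C_{i,1}\notin\{C_{j,1},\dots,C_{j,t}\}$), hence divisible by $\ell$ by maximality of $t$. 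Lemma~\ref{lemma:oddtown} now gives $m\le sn$. Removing all $\le t\cdot sn\le (k-1)sn$ vertices of the matching kills every bad $t$-tuple (maximality of the matching), so the new maximal bad size is $<t$. Repeating at most $k-1$ times removes at most $(k-1)^2 sn<sk^2n$ sets and leaves a $k$-closed family. So your plan, your use of Lemma~\ref{lemma:oddtown}, and your target bound are all correct; what you were missing is that disjointness of the bad tuples plus maximality of $t$ — not inclusion-minimality of a single blocker — is what forces the off-diagonal divisibility.
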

	
	\begin{proof}
		Repeat the following removal operation. Suppose that $\mathcal{F}$ is not $k$-closed, and let $t$ be maximal such that some $t$ distinct elements of $\mathcal{F}$ have an intersection not divisible by $\ell$. So $t < k$ because $\mathcal{F}$ is weakly $k$-closed. Let $\mathcal{H}$ be the $t$-uniform hypergraph on $\mathcal{F}$ in which $\{C_{1},\dots,C_{t}\}$ is an edge if $|C_1\cap\dots\cap C_t|$ is not divisible by $\ell$. We claim that $\mathcal{H}$ contains no matching of size more than $sn$. Indeed, suppose otherwise, let $\{C_{i,1},\dots,C_{i,t}\}$, $i\in [m]$, be the edges of a matching of size $m>sn$. For $i\in [m]$, let $A_{i}=C_{i,1}$ and $B_{i}=C_{i,1}\cap\dots\cap C_{i,t}$. Then $\ell\nmid|A_{i}\cap B_{i}|$, but $\ell$ divides $|A_i\cap B_j|$ for every $i\neq j$ by the maximality of $t$. Therefore, by Lemma \ref{lemma:oddtown} we get $m\leq sn$, contradiction.
		
		Consider a maximal matching of $\mathcal{H}$, and remove every element of $\mathcal{F}$ that appears in this matching. Then $\mathcal{F}$ no longer contains $t$ distinct sets, whose intersection is not divisible by $\ell$. Repeating this procedure at most $k-1$ times, we get a family $\mathcal{F}'\subset \mathcal{F}$ such that $\mathcal{F}'$ is $k$-closed over $\mathbb{Z}_{\ell}$, and $|\mathcal{F}'|\geq |\mathcal{F}|-sk^2n$.
	\end{proof}
	
	Lemma \ref{lemma:weakly} combined with Theorem \ref{thm:main} immediately implies that if $\mathcal{F}\subset 2^{[n]}$ is weakly $k$-closed over $\mathbb{Z}_{\ell}$, then $|\mathcal{F}|\leq 2^{\lfloor n/\ell\rfloor}+sk^2n$. In order to improve the term $sk^2n$ to a constant, we use the second part of Theorem~\ref{thm:main}.
	
	\begin{proof}[Proof of Theorem \ref{thm:main0}]
		Let $d=\lfloor n/\ell\rfloor$. Let $\mathcal{F}\subset\{0,1\}^{n}$ such that $\mathcal{F}$ is weakly $k$-closed over $\mathbb{Z}_{\ell}$. Then by Lemma \ref{lemma:weakly}, there exists $\mathcal{F}'\subset \mathcal{F}$ such that $|\mathcal{F}'|\geq |\mathcal{F}|-sk^2n$ and $\mathcal{F}'$ is $k$-closed over $\mathbb{Z}_{\ell}$, where $s$ is the number of distinct prime divisors of $\ell$. If $|\mathcal{F}'|\leq 2^{d-1}$, then 
		$$|\mathcal{F}|\leq 2^{d-1}+sk^2n<2^{d},$$
		where the last inequality holds if $n$ is sufficiently large. 
		
		Suppose that $|\mathcal{F}'|> 2^{d-1}$ and $|\mathcal{F}|\geq 2^{d}$, otherwise we are done. Then by Theorem \ref{thm:main}, $[n]$ can be partitioned into sets $A_{1},\dots,A_{d},A'$ such that $A_{i}$ is a maximal set of twins for $\mathcal{F}'$ for $i\in [d]$, $|A_{i}|=\ell$, $|A'|\leq \ell-1$, and $\mathcal{F}'$ vanishes on $A'$. Let $S\subset \{0,1\}^{n}$ be the atomic family containing all possible $2^{d}$ sets $C$ such that $C\cap A_{i}\in \{\emptyset,A_{i}\}$ for every $i\in [d]$. Then $\mathcal{F}'\subset S$ and $|S\setminus\mathcal{F}'|\leq sk^2n$, as $|\mathcal{F}'| \geq 2^d - sk^2n$. Also, if $n$ is sufficiently large, for every $i\in [d]$ we can find $k-1$ distinct sets $B_{i,1},\dots,B_{i,k-1}\in \mathcal{F}'$ such that $A_i=\bigcap_{j=1}^{k-1}B_{i,j}$. Indeed, $\mathcal{F}'$ contains a set of the form $A_{i}\cup A_{a}\cup A_{b}$ for some $a,b$, as the number of such sets in $S$ is $\binom{d-1}{2}>sk^2n$. Let $B_{i,1}$ be such a set $A_{i}\cup A_{a}\cup A_{b} \in \mathcal{F}'$. Also $\mathcal{F}'$ contains $k-2$ sets that contain $A_i$ but do not contain $A_{a}$ and $A_b$ as the number of such sets in $S$ is $2^{d-3}> sk^2n+k$. Let these $k-2$ sets be $B_{i,2},\dots,B_{i,k-1}$. Then $A_{i}=\bigcap_{j=1}^{k-1}B_{i,j}$, as claimed. 
		
		Consider any $F\in \mathcal{F}\setminus S$.
		For every $i\in [d]$, we have $A_i\subset F$ or $A_i\cap F=\emptyset$, as the size of  $A_i\cap F=B_{i,1}\cap\dots\cap B_{i,k-1}\cap F$ must be divisible by $\ell$. 
		Now, as $F \notin S$, we must have 
		$F \cap A' \neq \emptyset$.
		On the other hand, for any $H\subset A'$ with $H\neq \emptyset$, there are at most $k-1$ elements $F\in\mathcal{F}\setminus S$ such that $F\cap A'=H$, because otherwise we would have $k$ distinct $F_1,\dots,F_k \in \mathcal{F}$ with $|F_1 \cap \dots \cap F_k| \equiv |H| \not\equiv 0 \pmod{\ell}$, a contradiction. 
		So we see that $|\mathcal{F}\setminus S|\leq k2^{|A'|}\leq k2^{\ell-1}$, and if $\ell \mid n$ then $\mathcal{F}\subset S$ because $A' = \emptyset$. This finishes the proof.
	\end{proof}
	
	\section{Stability}
	
	In this section, we prove Theorem \ref{thm:stab}. The proof follows easily from our earlier results.
	
	\begin{proof}[Proof of Theorem \ref{thm:stab}]
		Write $\ell=p_1^{\alpha_1}\dots p_{s}^{\alpha_s}$, where $p_1,\dots,p_s$ are distinct primes. Let $t=\lceil 6\epsilon^{-1}\sum_{r\in [s]}\alpha_r\rceil$, then we show that $k=2^{t+1}\max_{r\in [s]} (p_r+\alpha_r)$ suffices.

		Without loss of generality, $\mathcal{F}$ is non-reducible. Let $A_1,\dots,A_d$ be the unique partition of $[n]$ into maximal sets of twins for $\mathcal{F}$. For $r\in [s]$, let 
		$$B_r=\bigcup_{\substack{i\in [d]\\ |A_i|\not\equiv 0 \mathrm{(mod }\ p_r^{\alpha_r})}} A_{i}.$$
		Then by Lemma \ref{lemma:smalldim}, we have $\dim(\langle \mathcal{F}|_{B_r}\rangle_{p_r})\leq \frac{6n\alpha_r}{t}.$
		This implies, using Lemma \ref{lemma:odim}, that $$|\mathcal{F}|_{B_r}|\leq 2^{6n\alpha_r/t}.$$
		Let $B=\bigcup_{r\in [s]} B_{r}$. Then $B$ is the union of those sets $A_i$, whose size is not divisible by $\ell$. Also, we have
		$$|\mathcal{F}|_{B}|\leq \prod_{r\in [s]} |\mathcal{F}|_{B_r}|\leq 2^{6n(\alpha_1+\dots+\alpha_s)/t}\leq 2^{\epsilon n}.$$
		Set $X=[n]\setminus B$, then $|\mathcal{F}|_{X}|\geq2^{-\epsilon n}|\mathcal{F}|$. But $X$ is the union of sets of twins of size divisible by $\ell$, so we can partition $X$ into sets of twins of size exactly $\ell$. Hence, $\mathcal{F}|_{X}$ is a subfamily of some isomorphic copy of $S(|X|,\ell)$, finishing the proof.
	\end{proof}
	
	\section{Concluding remarks}
	
	For many problems in extremal set theory, it is natural to consider their {\em multipartite} (i.e. {\em cross}) variant. For example, Frankl and Kupavskii \cite{FK19} proved a tight bound on $|\mathcal{F}_1| + \dots +|\mathcal{F}_k|$ for $k$ set-families $\mathcal{F}_1,\dots,\mathcal{F}_k$ with no choice of $k$ disjoint sets $F_1,\dots,F_k$, $F_i \in \mathcal{F}_i$. In a similar vein, Buci\'{c}, Letzter, Sudakov and Tran \cite{BLST18} proved a tight result for the multipartite version of the Erd\H{o}s--Kleitman conjecture. We refer the reader to the book \cite{FT_Book} for additional examples.
	
	One might wonder what can be said about the cross-version of Theorem \ref{thm:main0}. That is, let $\mathcal{F}_1,\dots,\mathcal{F}_{k}\subset 2^{[n]}$ such that $|F_1\cap\dots\cap F_{k}|$ is divisible by $\ell$ for every $F_{1}\in\mathcal{F}_1,\dots,F_{k}\in\mathcal{F}_{k}$. What is the maximum of $|\mathcal{F}_1|\dots|\mathcal{F}_{k}|$? It follows from Theorem \ref{thm:main} that if $\mathcal{F}_{1}=\dots=\mathcal{F}_{k}$, then this maximum is $2^{k\lfloor n/\ell\rfloor}$, given $k$ is sufficiently large with respect to $\ell$. However, if the families $\mathcal{F}_1,\dots,\mathcal{F}_k$ are not necessarily equal, the answer is very different. Indeed, let $A_{1},\dots,A_{k}$ be an arbitrary partition of $[n]$, and let $\mathcal{F}_{i}=2^{[n]\setminus A_{i}}$ for $i\in [k]$. Then $F_{1}\cap\dots\cap F_{k}=\emptyset$ for every $F_{i}\in \mathcal{F}_{i}$, $i\in [k]$, and $|\mathcal{F}_{1}|\dots|\mathcal{F}_{k}|=2^{(k-1)n}$. We can also show that $2^{(k-1)n}$ is the maximum, which somewhat surprisingly does not depend on $\ell$.
	
	\begin{theorem}
		Let $\ell,k\geq 2$, and let $\mathcal{F}_1,\dots,\mathcal{F}_{k}\subset 2^{[n]}$ such that $\ell$ divides $|F_1\cap\dots\cap F_{k}|$ for every $F_{1}\in\mathcal{F}_1,\dots,F_{k}\in\mathcal{F}_{k}$. Then $|\mathcal{F}_{1}|\dots|\mathcal{F}_{k}|\leq 2^{(k-1)n}$.
	\end{theorem}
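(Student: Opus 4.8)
The plan is to induct on $k$: handle $k=2$ by a linear-algebra (Eventown-type) argument, and reduce $k\geq 3$ to $k-1$ by replacing the last two families by their family of pairwise intersections, controlling the loss with the Ahlswede--Daykin four functions theorem. Throughout we may assume every $\mathcal{F}_i$ is nonempty, since otherwise the product is $0$ and there is nothing to prove; fix a prime $p$ with $p\mid\ell$, which exists as $\ell\geq 2$.

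\emph{Base case $k=2$.} For $A\in\mathcal{F}_1$ and $B\in\mathcal{F}_2$ we have $p\mid\ell\mid|A\cap B|$, so the characteristic vectors satisfy $\langle\mathds{1}_A,\mathds{1}_B\rangle\equiv 0\pmod p$. By bilinearity of the standard inner product over $\mathbb{F}_p$, this extends to all of $\langle\mathcal{F}_1\rangle_p$ and $\langle\mathcal{F}_2\rangle_p$, so $\langle\mathcal{F}_2\rangle_p\subseteq\langle\mathcal{F}_1\rangle_p^{\perp}$ and hence $\dim\langle\mathcal{F}_1\rangle_p+\dim\langle\mathcal{F}_2\rangle_p\leq n$. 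Since $|\mathcal{F}_i|\leq 2^{\dim\langle\mathcal{F}_i\rangle_p}$ by Lemma \ref{lemma:odim}, this gives $|\mathcal{F}_1|\,|\mathcal{F}_2|\leq 2^n$.

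\emph{Inductive step.} Let $k\geq 3$ and assume the claim for $k-1$ families. Set $\mathcal{G}=\{F\cap F':F\in\mathcal{F}_{k-1},\ F'\in\mathcal{F}_k\}$. For any $F_1\in\mathcal{F}_1,\dots,F_{k-2}\in\mathcal{F}_{k-2}$ and $G\in\mathcal{G}$, writing $G=F_{k-1}\cap F_k$ we get $F_1\cap\dots\cap F_{k-2}\cap G=F_1\cap\dots\cap F_k$, whose size is divisible by $\ell$; thus $\mathcal{F}_1,\dots,\mathcal{F}_{k-2},\mathcal{G}$ satisfy the hypothesis with $k-1$ families, and the induction hypothesis yields $|\mathcal{F}_1|\cdots|\mathcal{F}_{k-2}|\,|\mathcal{G}|\leq 2^{(k-2)n}$. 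On the other hand, the well-known Ahlswede--Daykin four functions theorem, in the form $|\mathcal{A}|\,|\mathcal{B}|\leq|\mathcal{A}\vee\mathcal{B}|\cdot|\mathcal{A}\wedge\mathcal{B}|$ for $\mathcal{A},\mathcal{B}\subseteq 2^{[n]}$ (where $\mathcal{A}\vee\mathcal{B}$ and $\mathcal{A}\wedge\mathcal{B}$ denote the families of pairwise unions and pairwise intersections), applied to $\mathcal{F}_{k-1},\mathcal{F}_k$ gives $|\mathcal{F}_{k-1}|\,|\mathcal{F}_k|\leq|\mathcal{F}_{k-1}\vee\mathcal{F}_k|\cdot|\mathcal{G}|\leq 2^n\,|\mathcal{G}|$. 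Multiplying the two displayed bounds and cancelling the factor $|\mathcal{G}|\geq 1$ gives $\prod_{i=1}^k|\mathcal{F}_i|\leq 2^{(k-2)n}\cdot 2^n=2^{(k-1)n}$.

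The only genuine idea is the reduction in the inductive step: passing from $\mathcal{F}_{k-1},\mathcal{F}_k$ to their intersection-family $\mathcal{G}$ keeps the divisibility property with one fewer family, while the four functions theorem bounds the resulting loss $|\mathcal{F}_{k-1}|\,|\mathcal{F}_k|/|\mathcal{G}|$ by exactly $2^n$, which is precisely what the gap between exponents $(k-1)n$ and $(k-2)n$ can absorb. I expect the main thing to get right to be this bookkeeping of exponents together with a correct statement of the four functions inequality; the base case is a routine Eventown-style dimension count via Lemma \ref{lemma:odim}, and the degenerate situations (some $\mathcal{F}_i$ empty, or $\mathcal{G}$ a single set) are immediate.
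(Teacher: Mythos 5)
Your proof is correct, but it takes a genuinely different route from the paper. Both arguments share the same base observation (over a prime $p\mid\ell$, the span dimensions obey Odlyzko's bound $|\mathcal{F}_i|\leq 2^{\dim\langle\mathcal{F}_i\rangle_p}$), but from there they diverge. The paper works directly with the subspaces $V_i=\langle\mathcal{F}_i\rangle_p$: it sets $V=V_1\cap\dots\cap V_{k-1}$, observes that $\langle V^{k-1}\rangle$ and $V_k$ are orthogonal so their dimensions add up to at most $n$, and then shows $\dim\langle V^{k-1}\rangle\geq\dim V$ by putting a basis of $V$ in reduced form and noting that taking $(k-1)$-fold coordinate-wise powers does not destroy linear independence at the pivot columns; combining these with the crude bound $\dim V\geq\sum_{i<k}\dim V_i-(k-2)n$ gives $\sum\dim V_i\leq(k-1)n$. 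You instead induct on $k$: you replace $\mathcal{F}_{k-1},\mathcal{F}_k$ by their intersection-family $\mathcal{G}=\mathcal{F}_{k-1}\wedge\mathcal{F}_k$, which manifestly inherits the $\ell$-divisibility hypothesis with one fewer family, and control the loss via Daykin's two-family inequality $|\mathcal{A}||\mathcal{B}|\leq|\mathcal{A}\vee\mathcal{B}||\mathcal{A}\wedge\mathcal{B}|$ (a corollary of Ahlswede--Daykin), which bounds $|\mathcal{F}_{k-1}||\mathcal{F}_k|/|\mathcal{G}|$ by $2^n$; the base case $k=2$ is the standard Eventown orthogonality count. Your bookkeeping is right: $\prod_{i\leq k-2}|\mathcal{F}_i|\cdot|\mathcal{G}|\leq 2^{(k-2)n}$ by induction, and multiplying by $|\mathcal{F}_{k-1}||\mathcal{F}_k|\leq 2^n|\mathcal{G}|$ gives $2^{(k-1)n}$. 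Your approach is more modular and conceptually explains why the answer is $\ell$-independent (each reduction step costs exactly a factor $2^n$), at the price of invoking a correlation inequality that is external to the paper's linear-algebraic toolkit; the paper's proof is self-contained but the dimension-preservation step for coordinate-wise powers is a less standard observation.
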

	
	\begin{proof}
		Let $p$ be any prime divisor of $\ell$, and let $V_{i}=\langle \mathcal{F}_{i}\rangle_p$ for $i\in [k]$ (where we identify the members of $\mathcal{F}_i$ with their characteristic vectors). Note that for every $v\in V_1\cdot...\cdot V_{k}$, we have $||v||=0$. Our goal is to show the inequality $$\sum_{i=1}^{k}\dim(V_{i})\leq (k-1)n,$$ which then immediately implies the desired bound $|\mathcal{F}_{1}|\dots|\mathcal{F}_{k}|\leq 2^{(k-1)n}$ noting that $|\mathcal{F}_{i}|\leq 2^{\dim(V_{i})}$.
		
		Let $V=V_1\cap\dots\cap V_{k-1}$, then $\dim(V)\geq \sum_{i=1}^{k-1}\dim(V_i)-(k-2)n$. Also, $V^{k-1}$ and $V_{k}$ are orthogonal spaces, so $\dim(V^{k-1})+\dim(V_{k})\leq n$. Therefore, in order to finish the proof, it is enough to show that $\dim(V^{k-1})\geq \dim(V)$, as then
		$$\sum_{i=1}^{k}\dim(V_{i})\leq \dim(V)+(k-2)n+\dim(V_{k})\leq \dim(V^{k-1})+\dim(V_{k})+(k-2)n\leq (k-1)n.$$
		Let $d=\dim(V)$, then there exist $1\leq i_1<\dots<i_d\leq n$ and a basis $v_{1},\dots,v_{d}\in V$ such that $v_{r}(i_{r})=1$ and $v_{r}(i_j)=0$ for $r\in [d]$, $j\in [d]\setminus\{r\}$. But then $v_{r}^{k-1}\in V^{k-1}$, and $v_{r}^{k-1}(i_j)=v_{r}(i_j)$ for $j\in [d]$. Therefore, the vectors $v_{1}^{k-1},\dots,v_{d}^{k-1}$ are linearly independent in $V^{k-1}$, hence $\dim(V^{k-1})\geq d$.
	\end{proof}






\section*{Acknowledgments} 
The authors would like to thank the anonymous referee for their useful comments and suggestions. 
All authors were supported by the SNSF grant 200021\_196965. Istv\'an Tomon also acknowledges the support of Russian Government in the framework of MegaGrant no 075-15-2019-1926, and the support of MIPT Moscow.

\bibliographystyle{amsplain}


\begin{dajauthors}
\begin{authorinfo}[lg]
  Lior Gishboliner\\
  ETH Z\"urich\\
  Z\"urich, Switzerland\\
  lior.gishboliner\imageat{}math\imagedot{}ethz\imagedot{}ch \\
\end{authorinfo}
\begin{authorinfo}[bs]
  Benny Sudakov\\
  Professor\\
  ETH Z\"urich\\
  Z\"urich, Switzerland\\
  benjamin.sudakov\imageat{}math\imagedot{}ethz\imagedot{}ch \\
\end{authorinfo}
\begin{authorinfo}[it]
  Istv\'an Tomon\\
  ETH Z\"urich\\
  Z\"urich, Switzerland\\
  istvan.tomon\imageat{}math\imagedot{}ethz\imagedot{}ch \\
\end{authorinfo}
\end{dajauthors}

\end{document}